\newcommand{\C}{\mathbb{C}}
\newcommand{\ZZ}{\mathbb{Z}}
\newcommand{\QQ}{\mathbb{Q}}
\newcommand{\NN}{\mathbb{N}}
\newcommand{\PP}{\mathbb{P}}
\newcommand{\OO}{\mathcal O}
\newcommand{\XX}{\mathcal X}
\newcommand{\YY}{\mathcal Y}
\newcommand{\VV}{\mathcal V}
\newcommand{\WW}{\mathcal W}
\newcommand{\EE}{\mathcal E}
\newcommand{\MM}{\mathcal M}
\newcommand{\FF}{\mathcal F}
\newcommand{\BB}{\mathfrak B}
\newcommand{\codim}{\hbox{codim}}
\newcommand{\gr}{\hbox{Gr}}
\newcommand{\wt}{\widetilde}
\newcommand{\rom}{\romannumeral}
\DeclareMathOperator{\aut}{Aut}
\DeclareMathOperator{\ide}{id}
\DeclareMathOperator{\ima}{Im}
\newtheorem{theorem}{Theorem}[section]
\newtheorem{lemma}[theorem]{Lemma}
\newtheorem{corollary}[theorem]{Corollary}
\newtheorem{proposition}[theorem]{Proposition}
\newtheorem{conjecture}[theorem]{Conjecture}
\newtheorem{remark}[theorem]{Remark}
\newtheorem{definition}[theorem]{Definition}
\newtheorem{convention}{Conventions}
\newtheorem{nonumbering}{Theorem}
\newtheorem{nonumberingc}{Corollary}
\newtheorem{nonumberingt}{Acknowledgements}
\begin{document}
\author[Robert Laterveer]
{Robert Laterveer}

\address{Institut de Recherche Math\'ematique Avanc\'ee,
CNRS -- Universit\'e 
de Strasbourg,\
7 Rue Ren\'e Des\-car\-tes, 67084 Strasbourg CEDEX,
FRANCE.}
\email{robert.laterveer@math.unistra.fr}

\title[On Chow groups of HK fourfolds with non--symplectic involution II]{On the Chow groups of some hyperk\"ahler fourfolds with a non--symplectic involution II}

\begin{abstract} This article is about hyperk\"ahler fourfolds $X$ admitting a non--symplectic involution $\iota$. The Bloch--Beilinson conjectures predict the way $\iota$ should act on certain pieces of the Chow groups of $X$.
The main result is a verification of this prediction for Fano varieties of lines on certain cubic fourfolds. This has some interesting consequences for the Chow ring of the quotient $X/\iota$.
\end{abstract}

\keywords{Algebraic cycles, Chow groups, motives, Bloch's conjecture, Bloch--Beilinson filtration, hyperk\"ahler varieties, non--symplectic involution, multiplicative Chow--K\"unneth decomposition, splitting property, Calabi--Yau varieties.}
\subjclass[2010]{Primary 14C15, 14C25, 14C30.}

\maketitle

\section{Introduction}

For a smooth projective variety $X$ defined over $\C$, let $A^i(X):=CH^i(X)_{\QQ}$ denote the Chow groups of $X$ (i.e. the groups of codimension $i$ algebraic cycles on $X$ with $\QQ$--coefficients, modulo rational equivalence). As explained for instance in \cite{J2} or \cite{Vo} or \cite{MNP}, the Bloch--Beilinson conjectures form a beautiful crystal ball, allowing to make strikingly concrete predictions about Chow groups. 
In this article, we consider one instance of such a prediction, concerning non--symplectic involutions on hyperk\"ahler varieties.

Let $X$ be a hyperk\"ahler variety (that is, a projective irreducible holomorphic symplectic manifold, cf. \cite{Beau0}, \cite{Beau1}), and assume that $X$ admits an anti--symplectic involution $\iota$. 
The action of $\iota$ on the subring $H^{\ast,0}(X)$ is well--understood: one has
 \[ \begin{split} \iota^\ast=  -\ide \colon\ \ \ &H^{2i,0}(X)\ \to\ H^{2i,0}(X)\ \ \ \hbox{for}\ i\ \hbox{odd}\ ,\\
                        \iota^\ast=  \ide \colon\ \ \ &H^{2i,0}(X)\ \to\ H^{2i,0}(X)\ \ \ \hbox{for}\ i\ \hbox{even}\ .\\
                  \end{split}\]
                  
The action of $\iota$ on the Chow ring $A^\ast(X)$ is more mysterious.                      
To state the conjectural behaviour,
we will now assume the Chow ring of $X$ has a bigraded ring structure $A^\ast_{(\ast)}(X)$, where each $A^i(X)$ splits into pieces
  \[ A^i(X) =\bigoplus_j A^i_{(j)}(X)\ ,\]
and the piece $A^i_{(j)}(X)$ is isomorphic to the graded $\gr^j_F A^i(X)$ for the Bloch--Beilinson filtration that conjecturally exists for all smooth projective varieties.   
 (It is expected that such a bigrading $A^\ast_{(\ast)}(-)$ exists for all hyperk\"ahler varieties \cite{Beau3}.) 
 
 Since the pieces $A^i_{(i)}(X)$ and $A^{\dim X}_{(i)}(X)$ should only depend on the subring $H^{\ast,0}(X)$, we arrive at the following conjecture:
 
 \begin{conjecture}\label{conj} Let $X$ be a hyperk\"ahler variety of dimension $2m$, and let $\iota\in\aut(X)$ be a non--symplectic involution. Then
   \[  \begin{split}  \iota^\ast= (-1)^i \ide\colon\ \ \ &A^{2i}_{(2i)}(X)\ \to\ A^{2i}(X)\ ,\\
                           \iota^\ast= (-1)^i \ide\colon\ \ \ &A^{2m}_{(2i)}(X)\ \to\ A^{2m}(X)\ .\\
                    \end{split}\]       
  \end{conjecture}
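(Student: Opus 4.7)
The plan is to combine the multiplicative Chow--K\"unneth decomposition for $X=F(Y)$ (the Fano variety of lines on a cubic fourfold $Y$) with a transfer of the problem to the cubic fourfold itself, where the non--symplectic involution $\iota$ on $X$ arises from an involution $\sigma$ on $Y$. There are only three non--trivial pieces to consider: $\iota^\ast=-\ide$ on $A^2_{(2)}(X)$, $\iota^\ast=-\ide$ on $A^4_{(2)}(X)$, and $\iota^\ast=\ide$ on $A^4_{(4)}(X)$; the cases $A^0_{(0)}(X)$ and $A^4_{(0)}(X)$ are tautological.

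First I would invoke the Shen--Vial multiplicative Chow--K\"unneth decomposition for Fano varieties of lines on smooth cubic fourfolds. This provides a family of orthogonal idempotents $\pi^{i}_{(j)}\in A^{4}(X\times X)$ realizing the bigrading $A^{\ast}_{(\ast)}(X)$, so the conjecture translates into a statement about the composition $\iota^\ast\circ\pi^{i}_{(j)}$ as a self--correspondence of $X$. Combined with the Beauville--Donagi incidence correspondence $P\subset X\times Y$, which induces an isomorphism of primitive (transcendental) motives between $X$ and $Y$ up to a Tate twist, this reduces the ``shallow'' pieces $A^2_{(2)}(X)$ and $A^4_{(2)}(X)$ to statements on $Y$. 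On $Y$, the action of $\sigma^\ast$ on the transcendental part of $H^4(Y)$ is $-\ide$ (since $\iota$ is anti--symplectic on $X$, the automorphism $\sigma$ acts as $-\ide$ on $H^{3,1}(Y)$), and the corresponding Chow-theoretic statement should follow from Bloch-type results available for cubic fourfolds through the structure of $A^{\ast}_{\mathrm{hom}}(Y)$.

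The delicate case is the ``deepest'' piece $A^4_{(4)}(X)$, where one needs $\iota^\ast=\ide$. This is a Bloch-type statement: since $\iota^\ast$ acts as $+\ide$ on $H^{4,0}(X)$, the Bloch--Beilinson philosophy predicts the corresponding identity on zero--cycles orthogonal to the shallower pieces. Here I would attempt Voisin's ``spread of cycles'' technique applied to a suitable family of admissible cubic fourfolds equipped with their involution, or alternatively express $\iota^\ast-\ide$ on $A^4_{(4)}(X)$ by a correspondence supported on the fixed locus $X^\iota\times X^\iota\subset X\times X$, and then analyze that fixed locus via the explicit description of $X^\iota$ in terms of $\sigma$--invariant lines on $Y$. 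The main obstacle is precisely this last point: a direct Bloch-type argument for hyperk\"ahler fourfolds is out of reach in general, and the proof will depend on exploiting both the MCK structure of the Fano variety and a sufficiently concrete description of the $\sigma$--fixed locus on $Y$.
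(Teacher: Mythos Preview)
Note first that the statement you are asked to address is a \emph{conjecture}; the paper does not prove it in general, only for the specific family of Fano varieties $X=F(Y)$ of theorem~\ref{main}. Your proposal tacitly specializes to that case, which is reasonable, but you should be explicit that no general proof is being offered.

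For that special case there is a genuine gap in your plan, and it comes from misidentifying where the difficulty lies. You treat $A^4_{(4)}(X)$ as the ``delicate case'' requiring a separate spread argument on $X$ or a fixed--locus analysis, while assuming that $A^2_{(2)}(X)$ and $A^4_{(2)}(X)$ follow from ``Bloch--type results available for cubic fourfolds''. The paper's argument runs essentially in the opposite direction. The hard work is precisely the Bloch--type statement on the cubic $Y$: one must \emph{prove} that $(\iota_Y)^\ast=-\ide$ on $A^3_{hom}(Y)$ (theorem~\ref{main2}), and this is where Voisin's spread technique is deployed, over the family of $\iota_\PP$--invariant cubics, after verifying the hypotheses of proposition~\ref{voisin2}. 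No such result is available off the shelf, so your appeal to it is a gap.

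Once that is established, everything else is soft structure from Shen--Vial. The equality $A^2_{(2)}(X)=({}^t P)_\ast A^3_{hom}(Y)$ gives $\iota^\ast=-\ide$ on $A^2_{(2)}(X)$. The surjection $A^2_{(2)}(X)\otimes A^2_{(2)}(X)\twoheadrightarrow A^4_{(4)}(X)$ (theorem~\ref{chowringfano}(\rom2)) then immediately yields $\iota^\ast=(-1)\cdot(-1)=\ide$ on $A^4_{(4)}(X)$; and the isomorphism $\cdot\ell\colon A^2_{(2)}(X)\xrightarrow{\cong} A^4_{(2)}(X)$ with $\ell$ being $\iota$--invariant (proposition~\ref{ell}) gives $\iota^\ast=-\ide$ on $A^4_{(2)}(X)$. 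So the ``deepest'' piece $A^4_{(4)}$ needs no independent spread or fixed--locus argument at all; it is a formal consequence of the multiplicative structure once the $A^2_{(2)}$ case (equivalently, the statement on $A^3_{hom}(Y)$) is known. Your proposed route through $X^\iota$ or a spread argument directly on the hyperk\"ahler fourfold would be substantially harder and is not needed.
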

 
 This conjecture is studied (and proven in some favourable cases) in \cite{EPW}, \cite{HKnonsymp}, \cite{ChowEPW}, \cite{I}, \cite{cube}.
 The aim of this article is to provide more examples where conjecture \ref{conj} is verified, by considering Fano varieties of lines on cubic fourfolds. The main result is as follows:

\begin{nonumbering}[=theorem \ref{main}] Let $Y\subset\PP^5(\C)$ be a smooth cubic fourfold defined by an equation
    \[ \begin{split} (X_0)^2 \ell_0(X_3,X_4,X_5)+  (X_1)^2\ell_1(X_3,X_4,X_5)+(X_2)^2\ell_2(X_3,X_4,X_5)+X_0 X_1  \ell_3(X_3,X_4,X_5)& \\ + X_0 X_2 \ell_4(X_3,X_4,X_5) +  
    X_1 X_2 \ell_5(X_3,X_4,X_5)  +g(X_3,\ldots,X_5)=0\ ,& \\
    \end{split}\]
    where the $\ell_i$ are linear forms and $g$ is a homogeneous degree $3$ polynomial.
 Let $X=F(Y)$ be the Fano variety of lines in $Y$. Let $\iota\in\aut(X)$ be the anti--symplectic involution induced by
  \[ \begin{split}  \PP^5(\C)\ &\to\ \PP^5(\C)\ ,\\
                        [X_0,X_1,\ldots,X_5]\ &\mapsto\ [-X_0,-X_1,-X_2,X_3,X_4,X_5]\ .\\
                        \end{split}\]
 Then
   \[  \begin{split}  \iota^\ast=-\ide\colon\ \ \ &A^i_{(2)}(X)\ \to\ A^i_{(2)}(X)\ \ \ \hbox{for}\ i=2,4\ ;\\
                            \iota^\ast=\ide\colon\ \ \ &A^4_{(j)}(X)\ \to\ A^4_{(j)}(X)\ \ \ \hbox{for}\ j=0,4\ .\\
                        \end{split}\]  
        \end{nonumbering}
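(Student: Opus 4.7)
My first observation is geometric: substituting $X_3 = X_4 = X_5 = 0$ makes every term of the defining equation vanish, so $Y$ contains the plane $P := \{X_3 = X_4 = X_5 = 0\}$, which is moreover pointwise fixed by $\iota$. Hence $Y$ lies in the Hassett divisor $\mathcal{C}_8$ of cubic fourfolds containing a plane. Symmetrically, $Y \cap \{X_0=X_1=X_2=0\}$ is the plane cubic curve $E := \{g = 0\}$, again pointwise fixed. Since $\iota$ acts trivially on the coordinates $X_3, X_4, X_5$ parametrizing the projection from $P$, the associated quadric surface bundle $\mathrm{Bl}_P(Y) \to \mathbb{P}^2$ is $\iota$-equivariant with $\iota$ acting fiberwise. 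A direct enumeration then shows that the fixed locus $X^\iota$ is the disjoint union of the dual plane $P^* \cong \mathbb{P}^2$ (parametrizing lines in $P$) and a surface $\Sigma$ parametrizing lines in $Y$ joining a point of $P$ to a point of $E$ (cut out in $P \times E$ by the single condition that the connecting line lie in $Y$).

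Next, I invoke the Shen--Vial multiplicative Chow--K\"unneth decomposition for $X = F(Y)$. Since the Shen--Vial projectors are built from the Pl\"ucker polarization and the Beauville--Bogomolov form -- both $\iota$-invariant under our linear action -- the MCK is $\iota$-equivariant, so $\iota^*$ preserves the bigrading $A^*_{(*)}(X)$. The assertion on $A^4_{(0)}(X) = \mathbb{Q}\cdot[\mathrm{pt}]$ is immediate. For the pieces $A^2_{(2)}$ and $A^4_{(2)}$, the explicit structure of the Shen--Vial projectors (together with the ring relations in the MCK) lets one express the $\iota$-action in terms of the action on $H^{2,0}(X)$; anti-symplecticity gives $\iota^* = -\ide$ on $H^{2,0}(X)$, hence on $A^2_{(2)}$ and on $A^4_{(2)}$, in the spirit of the prequel \cite{cube}.

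The principal difficulty is the Chow-theoretic statement on $A^4_{(4)}(X)$: the cohomological identity $\iota^* \sigma^2 = \sigma^2$ on $H^{4,0}(X)$ is trivial, but lifting it to $0$-cycles is an instance of Bloch's conjecture. The plan is to construct a self-correspondence $\Gamma \in A^4(X \times X)$ that acts as $\iota^* - \ide$ on $A^4_{(4)}(X)$ and show it vanishes. Since $\dim X^\iota = 2$, a Bloch--Srinivas argument reduces $\Gamma$ to a cycle factoring through a surface, so that the vanishing on $A^4_{(4)}$ becomes a Bloch-type statement on that surface. The cleanest implementation exploits the $\mathcal{C}_8$ structure: for a generic member of this divisor the quadric bundle $\mathrm{Bl}_P(Y) \to \mathbb{P}^2$ has discriminant a sextic curve, and the associated K3 double cover $S \to \mathbb{P}^2$ gives a birational identification $X \sim S^{[2]}$; one transports $\iota$ to an anti-symplectic involution on $S^{[2]}$ and then invokes Bloch's conjecture for $(S,\tau)$ combined with the analysis of natural involutions on Hilbert squares as in \cite{HKnonsymp}.

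\textbf{Main obstacle.} The expected difficulty is twofold. First, one must verify that the birational identification $X \sim S^{[2]}$ extends to our specific (non-generic) family of cubics in $\mathcal{C}_8$ and intertwines $\iota$ with the intended involution on the Hilbert scheme; this requires a careful analysis of how the fiberwise $\iota$-action on the quadric bundle descends to $S$. Secondly, one must check that the motivic correspondence inducing this birational identification is compatible with the MCK bigradings on both sides, so that the comparison between $A^4_{(4)}(X)$ and the corresponding piece on $S^{[2]}$ is genuine. The first of these, in particular, is where the special form of the equation for $Y$ (rather than arbitrary $Y \in \mathcal{C}_8$) plays a role.
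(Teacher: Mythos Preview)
Your plan misidentifies where the difficulty lies, and this leads to a genuine gap.

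You treat the statement $\iota^\ast=-\ide$ on $A^2_{(2)}(X)$ (and hence on $A^4_{(2)}(X)$) as if it followed formally from the cohomological fact $\iota^\ast=-\ide$ on $H^{2,0}(X)$ via ``the explicit structure of the Shen--Vial projectors.'' It does not. The Shen--Vial projectors are correspondences in $A^4(X\times X)$; their $\iota$--equivariance (which you correctly note, and which the paper proves as a separate lemma) only tells you that $\iota^\ast$ preserves each piece $A^i_{(j)}(X)$, not how it acts on that piece. Passing from ``$\iota^\ast=-\ide$ on $H^{2,0}$'' to ``$\iota^\ast=-\ide$ on $A^2_{(2)}$'' is \emph{exactly} the Bloch--type statement the theorem is asserting; it cannot be read off from the projectors. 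Your appeal to \cite{cube} does not fill this in.

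In the paper, this is the hard step, and it is handled by a completely different mechanism: one first proves $(\iota_Y)^\ast=-\ide$ on $A^3_{hom}(Y)$ by Voisin's \emph{spread} technique over the full family of $\iota_{\PP}$--invariant cubics (showing the relative cycle ${}^t\Gamma_{\iota_\YY}+\Delta_\YY$ is, up to decomposed pieces, the restriction of a cycle on $\PP^5\times\PP^5$), and then uses the Shen--Vial identity $A^2_{(2)}(X)=({}^tP)_\ast A^3_{hom}(Y)$ to transport this to $X$. Once $A^2_{(2)}$ is settled, the remaining pieces fall out immediately from the multiplicative structure: $A^4_{(2)}\cong A^2_{(2)}$ via $\cdot\,\ell$ with $\ell$ manifestly $\iota$--invariant, $A^4_{(4)}$ is the image of $A^2_{(2)}\otimes A^2_{(2)}$ so $(-1)\cdot(-1)=1$ gives $\iota^\ast=\ide$ there, and $A^4_{(0)}=\QQ\cdot\ell^2$.

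In particular, your ``principal difficulty'' $A^4_{(4)}$ is in fact the \emph{easiest} piece once $A^2_{(2)}$ is known, so the whole $\mathcal{C}_8$/K3 detour is unnecessary. That said, your K3 route is not absurd as an alternative strategy for the entire theorem: if one could really show that the birational map $X\dashrightarrow S^{[2]}$ intertwines $\iota$ with a natural involution on $S^{[2]}$ induced by a non--symplectic $\tau\in\aut(S)$, and then verify Bloch's conjecture for $(S,\tau)$, one would get all four pieces at once. But you have not done any of this---you flag the intertwining and the MCK--compatibility as open obstacles---and it is far from clear that the induced involution on $S^{[2]}$ is of ``natural'' type (i.e.\ comes from an involution of $S$) rather than a genuinely new automorphism of the Hilbert square, in which case \cite{HKnonsymp} does not apply. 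So as written, the proposal proves neither $A^2_{(2)}$ nor $A^4_{(4)}$.
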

        
 Here, the notation $A^\ast_{(\ast)}(X)$ refers to the Fourier decomposition of the Chow ring of $X$ constructed by Shen--Vial \cite{SV}. (We mention in passing that for $X$ as in theorem \ref{main}, it is unfortunately not yet known whether $A^\ast_{(\ast)}(X)$ is a bigraded ring, cf. remark \ref{pity} below.) 
 
 To prove theorem \ref{main}, we exploit the fact that the family of cubics under consideration is sufficiently large for the method of ``spread'' as developed by Voisin \cite{V0}, \cite{V1} to apply. There is only one other family of cubic fourfolds with a polarized involution that is anti-symplectic on the Fano variety; this other family has been treated in \cite{I}, using arguments very similar to those of the present article.
 The action of polarized {\em symplectic\/} automorphisms on Chow groups of Fano varieties of cubic fourfolds has already been studied by L. Fu \cite{LFu2}, similarly using Voisin's method of ``spread''.  
        
  Theorem \ref{main} has some rather striking consequences for the Chow ring of the quotient (this quotient is a slightly singular Calabi--Yau fourfold):
  
 \begin{nonumberingc}[=corollaries \ref{cor} and \ref{cor3}] Let $(X,\iota)$ be as in theorem \ref{main}, and let $Z:=X/\iota$ be the quotient. Then
  the images of the intersection product maps
   \[ \begin{split} A^2(Z)\otimes A^2(Z)\ &\to\ A^4(Z)\ ,\\
                         A^3(Z)\otimes A^1(Z)\ &\to\ A^4(Z)\ \\  
                        \end{split}   \]
   are of dimension $1$. 
       \end{nonumberingc}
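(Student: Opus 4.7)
The plan is to reduce both statements to assertions about $\iota$-invariant cycles on $X$ via the standard isomorphism $\pi^\ast\colon A^\ast(Z) \xrightarrow{\sim} A^\ast(X)^\iota$ of $\mathbb Q$-algebras induced by the quotient map $\pi\colon X\to Z$. It then suffices to bound the images of
\[ A^2(X)^\iota\otimes A^2(X)^\iota\,\longrightarrow\, A^4(X)^\iota\qquad\hbox{and}\qquad A^3(X)^\iota\otimes A^1(X)^\iota\,\longrightarrow\, A^4(X)^\iota.\]

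First I would identify the $\iota$-invariant pieces of the Fourier decomposition using theorem \ref{main}. The theorem gives $\iota^\ast=-\ide$ on $A^2_{(2)}(X)$ and $A^4_{(2)}(X)$, and $\iota^\ast=\ide$ on $A^4_{(0)}(X)$ and $A^4_{(4)}(X)$; since $\iota$ is polarised and so fixes the Plücker class $g$ and the Chern classes of $X$, it acts trivially on every ``algebraic'' piece $A^i_{(0)}(X)$ as well. This yields
\[ A^2(X)^\iota = A^2_{(0)}(X),\qquad A^4(X)^\iota = A^4_{(0)}(X)\oplus A^4_{(4)}(X).\]
For the intermediate group $A^3(X)^\iota$ I would argue that $\iota^\ast=-\ide$ on $A^3_{(2)}(X)$ too: the Shen--Vial Fourier cycle $L\in A^2(X\times X)$ is $(\iota\times\iota)$-invariant (it is built from the natural incidence correspondence on $F(Y)$, which is $\iota$-equivariant), so the induced $L_\ast$ intertwines the $\iota^\ast$-actions on $A^2_{(2)}(X)$ and on $A^3_{(2)}(X)$; combined with the Shen--Vial surjection $L_\ast\colon A^2_{(2)}(X)\twoheadrightarrow A^3_{(2)}(X)$, this forces $A^3(X)^\iota = A^3_{(0)}(X)$.

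The main obstacle is then to show that both intersection products land inside the one-dimensional subspace $A^4_{(0)}(X) = \mathbb Q\cdot o_X$, avoiding the potentially huge transcendental piece $A^4_{(4)}(X)$. Since the multiplicativity of the Fourier decomposition is not established for our $X$ (cf. remark \ref{pity}), one cannot simply invoke $A^\ast_{(0)}(X)\cdot A^\ast_{(0)}(X)\subset A^\ast_{(0)}(X)$. My bypass would exploit a feature specific to Fano varieties of lines on cubic fourfolds: the piece $A^i_{(0)}(X)$ is contained in the subring $R^\ast(X)\subset A^\ast(X)$ generated by $A^1(X)$ together with the Chern classes $c_j(X)$, and Voisin's injectivity theorem asserts that $R^\ast(X)$ embeds into $H^\ast(X,\mathbb Q)$ via the cycle class map. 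Consequently any product $A^2_{(0)}\cdot A^2_{(0)}$, or $A^3_{(0)}\cdot A^1$, lies in $R^4(X)$ and is determined by its cohomology class in the one-dimensional space $H^8(X,\mathbb Q) = \mathbb Q\cdot[o_X]$; the product is therefore a rational multiple of $o_X$. Since $g^4\in A^4(X)$ is a nonzero such multiple, each image is exactly one-dimensional, and the corollaries follow.
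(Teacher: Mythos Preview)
Your reduction to $\iota$-invariants is fine, and you correctly identify the real difficulty: ruling out any contribution from $A^4_{(4)}(X)$. However, your bypass has a genuine gap.

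The assertion that $A^i_{(0)}(X)$ lies in the tautological subring $R^\ast(X)$ generated by $A^1(X)$ and the Chern classes is not established (and there is no reason to expect it). The piece $A^2_{(0)}(X)$ is defined via the Fourier transform; it certainly contains $g^2$ and $c_2$, but the projection to $A^2_{(0)}$ of an arbitrary codimension-$2$ algebraic cycle need not be expressible as a polynomial in divisors and Chern classes. Consequently neither your claim that $\iota^\ast$ acts trivially on all of $A^i_{(0)}(X)$, nor your claim that products $A^2_{(0)}\cdot A^2_{(0)}$ and $A^3_{(0)}\cdot A^1$ lie in $R^4(X)$, is justified. (Separately, the map $L_\ast$ with $L\in A^2(X\times X)$ has degree $-2$, so it cannot give a surjection $A^2_{(2)}\twoheadrightarrow A^3_{(2)}$ as you write; you would need a different operator here.)

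The paper avoids the unknown inclusion $A^2_{(0)}\cdot A^2_{(0)}\subset A^4_{(0)}$ (cf.\ remark \ref{pity}) by invoking the \emph{partial} multiplicativity results that Shen--Vial do prove unconditionally: $A^2_{(0)}\cdot A^2_{(0)}\subset A^4_{(0)}\oplus A^4_{(2)}$ \cite[Proposition 22.8]{SV}, $A^3_{(2)}\cdot A^1\subset A^4_{(2)}$ \cite[Proposition 22.6]{SV}, and $A^3_{(0)}=A^1_{prim}\cdot A^2_{(0)}$ together with $A^1\cdot A^1\cdot A^2_{(0)}\subset A^4_{(0)}\oplus A^4_{(2)}$. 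The crucial point is that these products already avoid $A^4_{(4)}$; the remaining $A^4_{(2)}$-component is then killed because $\iota$ respects the Fourier decomposition (lemma \ref{compat}) and $A^4_{(2)}(X)^\iota=0$ by theorem \ref{main}. No control on $A^2_{(0)}$ beyond its definition is needed, and in particular one never has to know that $\iota$ acts trivially on $A^2_{(0)}$: only the inclusion $A^2(X)^\iota\subset A^2_{(0)}(X)$ (lemma \ref{split}) is used.
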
     
                          
In particular, this means that for any two cycles $b,c\in A^2(Z)$ (or $b\in A^3(Z)$ and $c\in A^1(Z)$), the $0$--cycle $b\cdot c$ is rationally trivial if and only if it has degree $0$. This is similar to results for Calabi--Yau complete intersections obtained in \cite{V13}, \cite{LFu}.

\begin{nonumberingc}[=corollary \ref{cor2}] Let $(X,\iota)$ be as in theorem \ref{main}, and let $Z:=X/\iota$ be the quotient.
 Then the image of the intersection product map
    \[ \ima \bigl(  A^2(Z)\otimes A^1(Z)\xrightarrow{} A^3(Z) \bigr) \]
  injects into $H^6(Z)$ under the cycle class map.
\end{nonumberingc}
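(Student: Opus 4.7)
The plan is to pull everything back to $X$ via the quotient map $\pi\colon X\to Z$ and then exploit theorem \ref{main} together with the Shen--Vial Fourier decomposition of $A^*(X)$. Because $\iota$ has order $2$ and we use rational coefficients, $\pi^*\colon A^*(Z)\to A^*(X)^\iota$ is a ring isomorphism compatible with cycle class maps; so it suffices to show that if $B\in A^2(X)^\iota$ and $C\in A^1(X)^\iota$, and $B\cdot C$ is homologically trivial on $X$, then $B\cdot C=0$ in $A^3(X)$.

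The first key step invokes theorem \ref{main}: since $\iota^*=-\ide$ on $A^2_{(2)}(X)$ and the decomposition $A^2(X)=A^2_{(0)}(X)\oplus A^2_{(2)}(X)$ is $\iota^*$-stable, one has $A^2(X)^\iota\subset A^2_{(0)}(X)$. For divisors there is nothing to do: $H^1(X,\OO_X)=0$ on a hyperk\"ahler variety forces $A^1(X)$ to inject into $H^2(X,\QQ)$, so $A^1(X)=A^1_{(0)}(X)$ automatically. Thus $B\in A^2_{(0)}(X)$ and $C\in A^1_{(0)}(X)$.

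The second and crucial step is to conclude that $B\cdot C\in A^3_{(0)}(X)$. Full multiplicativity of the bigrading is not yet known in this setting (remark \ref{pity}), so I would appeal to the weaker partial multiplicativity $A^1(X)\cdot A^2_{(0)}(X)\subset A^3_{(0)}(X)$ established by Shen--Vial for Fano varieties of lines on a cubic fourfold. This is the point where the argument relies most on external input and is the main obstacle: one must verify that this specific inclusion between graded pieces is indeed available in the Shen--Vial package, not merely the direct-sum decomposition itself.

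Once that inclusion is granted, the proof concludes at once: by construction, $A^3_{(0)}(X)$ injects into $H^6(X,\QQ)$ under the cycle class map, so a homologically trivial element of $A^3_{(0)}(X)$ must vanish. Hence $B\cdot C=0$ in $A^3(X)$, and by linearity the same holds for arbitrary elements of the image of $A^2(Z)\otimes A^1(Z)\to A^3(Z)$, yielding the claimed injectivity into $H^6(Z)$.
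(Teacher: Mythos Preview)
Your proposal is correct and follows essentially the same route as the paper's proof: pull back to $X$, use theorem \ref{main} (together with the $\iota^\ast$--stability of the Fourier decomposition, which is lemma \ref{compat}) to get $A^2(X)^\iota\subset A^2_{(0)}(X)$, then use the partial multiplicativity $A^2_{(0)}(X)\cdot A^1(X)\subset A^3_{(0)}(X)$, and finally the injectivity of $A^3_{(0)}(X)$ into cohomology. The one point you rightly flag as needing verification---the inclusion $A^2_{(0)}(X)\cdot A^1(X)\subset A^3_{(0)}(X)$---is cited in the paper not from the Shen--Vial memoir itself but from \cite[Proposition A.7]{FLV}; so your caution was warranted, and the needed input is indeed available.
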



Corollaries \ref{cor} and \ref{cor2} provide some support in favour of the conjecture that 
  \[ A^2_{hom}(Z)\stackrel{??}{=}0\ .\] 
  (The Bloch--Beilinson conjectures would imply that $A^2_{AJ}(M)=0$ for any Calabi--Yau variety $M$ of dimension $>2$. As far as I know, there is not a single Calabi--Yau variety $M$ for which this is known to be true.)

 \vskip0.6cm

\begin{convention} In this article, the word {\sl variety\/} will refer to a reduced irreducible scheme of finite type over $\C$. A {\sl subvariety\/} is a (possibly reducible) reduced subscheme which is equidimensional. 

{\bf All Chow groups will be with rational coefficients}: we will denote by $A_j(X)$ the Chow group of $j$--dimensional cycles on $X$ with $\QQ$--coefficients; for $X$ smooth of dimension $n$ the notations $A_j(X)$ and $A^{n-j}(X)$ are used interchangeably. 

The notations $A^j_{hom}(X)$, $A^j_{AJ}(X)$ will be used to indicate the subgroups of homologically trivial, resp. Abel--Jacobi trivial cycles.
For a morphism $f\colon X\to Y$, we will write $\Gamma_f\in A_\ast(X\times Y)$ for the graph of $f$.
The category of Chow motives (i.e., pure motives with respect to rational equivalence as in \cite{Sc}, \cite{MNP}) will be denoted $\MM_{\rm rat}$.



We will write $H^j(X)$ 
to indicate singular cohomology $H^j(X,\QQ)$.

\end{convention}

\section{Preliminaries}

\subsection{MCK decomposition}
\label{ss1}

\begin{definition}[Murre \cite{Mur}] Let $X$ be a smooth projective variety of dimension $n$. We say that $X$ has a {\em CK decomposition\/} if there exists a decomposition of the diagonal
   \[ \Delta_X= \pi_0+ \pi_1+\cdots +\pi_{2n}\ \ \ \hbox{in}\ A^n(X\times X)\ ,\]
  such that the $\pi_i$ are mutually orthogonal idempotents and $(\pi_i)_\ast H^\ast(X)= H^i(X)$.
  
  (NB: ``CK decomposition'' is shorthand for ``Chow--K\"unneth decomposition''.)
\end{definition}

\begin{remark} The existence of a CK decomposition for any smooth projective variety is part of Murre's conjectures \cite{Mur}, \cite{J2}. 
\end{remark}

\begin{definition}[Shen--Vial \cite{SV}] Let $X$ be a smooth projective variety of dimension $n$. Let $\Delta_X^{sm}\in A^{2n}(X\times X\times X)$ be the class of the small diagonal
  \[ \Delta_X^{sm}:=\bigl\{ (x,x,x)\ \vert\ x\in X\bigr\}\ \subset\ X\times X\times X\ .\]
  An {\em MCK decomposition\/} is a CK decomposition $\{\pi^X_i\}$ of $X$ that is {\em multiplicative\/}, i.e. it satisfies
  \[ \pi^X_k\circ \Delta_X^{sm}\circ (\pi^X_i\times \pi^X_j)=0\ \ \ \hbox{in}\ A^{2n}(X\times X\times X)\ \ \ \hbox{for\ all\ }i+j\not=k\ .\]
  
 (NB: ``MCK decomposition'' is shorthand for ``multiplicative Chow--K\"unneth decomposition''.) 
  
 A {\em weak MCK decomposition\/} is a CK decomposition $\{\pi^X_i\}$ of $X$ that satisfies
    \[ \Bigl(\pi^X_k\circ \Delta_X^{sm}\circ (\pi^X_i\times \pi^X_j)\Bigr){}_\ast (a\times b)=0 \ \ \ \hbox{for\ all\ } a,b\in\ A^\ast(X)\ .\]
  \end{definition}
  
  \begin{remark} The small diagonal (seen as a correspondence from $X\times X$ to $X$) induces the {\em multiplication morphism\/}
    \[ \Delta_X^{sm}\colon\ \  h(X)\otimes h(X)\ \to\ h(X)\ \ \ \hbox{in}\ \MM_{\rm rat}\ .\]
 Suppose $X$ has a CK decomposition
  \[ h(X)=\bigoplus_{i=0}^{2n} h^i(X)\ \ \ \hbox{in}\ \MM_{\rm rat}\ .\]
  By definition, this decomposition is multiplicative if for any $i,j$ the composition
  \[ h^i(X)\otimes h^j(X)\ \to\ h(X)\otimes h(X)\ \xrightarrow{\Delta_X^{sm}}\ h(X)\ \ \ \hbox{in}\ \MM_{\rm rat}\]
  factors through $h^{i+j}(X)$.
  
  If $X$ has a weak MCK decomposition, then setting
    \[ A^i_{(j)}(X):= (\pi^X_{2i-j})_\ast A^i(X) \ ,\]
    one obtains a bigraded ring structure on the Chow ring: that is, the intersection product sends $A^i_{(j)}(X)\otimes A^{i^\prime}_{(j^\prime)}(X) $ to  $A^{i+i^\prime}_{(j+j^\prime)}(X)$.
    
      It is expected (but not proven !) that for any $X$ with a weak MCK decomposition, one has
    \[ A^i_{(j)}(X)\stackrel{??}{=}0\ \ \ \hbox{for}\ j<0\ ,\ \ \ A^i_{(0)}(X)\cap A^i_{hom}(X)\stackrel{??}{=}0\ ;\]
    this is related to Murre's conjectures B and D, which have been formulated for any CK decomposition \cite{Mur}.
        
  The property of having an MCK decomposition is severely restrictive, and is closely related to Beauville's ``(weak) splitting property'' \cite{Beau3}. For more ample discussion, and examples of varieties with an MCK decomposition, we refer to \cite[Section 8]{SV}, as well as \cite{V6}, \cite{SV2}, \cite{FTV}.
    \end{remark}

In what follows, we will make use of the following: 

\begin{theorem}[Shen--Vial \cite{SV}]\label{fanomck} Let $Y\subset\PP^5(\C)$ be a smooth cubic fourfold, and let $X:=F(Y)$ be the Fano variety of lines in $Y$. There exists a CK decomposition $\{\pi^X_i\}$ for $X$, and 
  \[ (\pi^X_{2i-j})_\ast A^i(X) = A^i_{(j)}(X)\ ,\]
  where the right--hand side denotes the splitting of the Chow groups defined in terms of the Fourier transform as in \cite[Theorem 2]{SV}. Moreover, we have
  \[ A^i_{(j)}(X)=0\ \ \ \hbox{for\ }j<0\ \hbox{and\ for\ }j>i\ .\]
  
  In case $Y$ is very general, the Fourier decomposition $A^\ast_{(\ast)}(X)$ forms a bigraded ring, and hence
  $\{\pi^X_i\}$ is a weak MCK decomposition.
    \end{theorem}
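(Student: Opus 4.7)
The plan is to follow the program of Shen--Vial, building the full structure from the incidence geometry of lines on $Y$.

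First, I would construct the fundamental cycle $L\in A^2(X\times X)$. Starting from the incidence subvariety $I:=\{(\ell,\ell')\mid \ell\cap\ell'\neq\emptyset\}\subset X\times X$, one forms a corrected class $L:= I - \alpha\cdot (g\times 1 + 1\times g)\cdot \Delta_X - (\hbox{lower order polarization terms})$ with appropriate rational constants $\alpha$, where $g$ is the Pl\"ucker polarization. The corrections are engineered so that the quadratic relation satisfied by $I^2$ in $A^\ast(X\times X)$ (Voisin's identity) translates into a clean nilpotence statement for $L$. The Fourier transform $\FF:=\exp(L/2)$, viewed as a self--correspondence of $X$, then satisfies an involutory--type identity relating $\FF\circ\FF$ to the permutation correspondence together with corrections supported on polarization classes.

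Second, I would build the Chow--K\"unneth projectors $\pi^X_i\in A^4(X\times X)$ as explicit polynomials in $L$, $g\times 1$, $1\times g$, and classes of a Beauville--Voisin canonical $0$--cycle (the latter handling the top and bottom pieces $\pi^X_0$ and $\pi^X_8$). Mutual orthogonality and $\sum \pi^X_i=\Delta_X$ would follow formally from the $L$--relations of the first step, while compatibility with the K\"unneth decomposition of $H^\ast(X)$ is checked by transcribing the action of $L$ on cohomology into the Beauville--Bogomolov form on $H^2(X)$. Setting $A^i_{(j)}(X):=(\pi^X_{2i-j})_\ast A^i(X)$ then identifies these projector--defined pieces with the Fourier--transform decomposition.

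Third, the vanishing $A^i_{(j)}(X)=0$ for $j<0$ or $j>i$ must be established. The lower bound is automatic from the nilpotence range of $L$ together with the degree constraints on the projectors. The upper bound is the substantive statement: it encodes compatibility of the Fourier grading with a Bloch--Beilinson type filtration. I would argue by induction on $i$, using Mumford--Roitman style vanishing combined with Voisin's description of $A^4(X)$ and the fact that $g^4$ generates a canonical rank--one subgroup of $A^4(X)$ (the analogue of Beauville--Voisin for the Fano variety $F(Y)$).

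Finally, for $Y$ very general, the weak MCK property reduces to the vanishing $\Bigl(\pi^X_k\circ\Delta_X^{sm}\circ(\pi^X_i\times\pi^X_j)\Bigr){}_\ast(a\times b)=0$ for $i+j\neq k$ and all $a,b\in A^\ast(X)$. Spreading the pair $(Y,X)$ over the parameter space $B$ of smooth cubic fourfolds, this becomes the vanishing of a relative cycle on the triple fibre product which is fiberwise cohomologically trivial for $i+j\neq k$. Voisin's spread principle (generic rational triviality for cycles generically cohomologous to zero in families with sufficiently large variation of Hodge structure) then yields fiberwise rational triviality for very general $b\in B$. The main obstacle will be precisely this last step: verifying the Hodge--theoretic hypotheses of the spread principle for the relevant cycles on $X\times X\times X$, and in particular controlling the monodromy action on the cohomology of the triple product, is the most technically demanding part of the argument.
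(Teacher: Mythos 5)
The theorem you are asked to prove is explicitly attributed to Shen--Vial in its header, and the paper's own proof is purely bibliographical: the CK decomposition, its agreement with the Fourier decomposition, and the vanishing $A^i_{(j)}(X)=0$ for $j<0$ or $j>i$ are taken directly from \cite[Theorem 3.3]{SV} (with \cite[Proposition A.6]{FLV} as an alternative reference), and the bigraded--ring statement for very general $Y$ is \cite[Theorem 3]{SV}. Your proposal instead tries to reconstruct the entire Shen--Vial theory from scratch, which is a much more ambitious undertaking than what the paper does. The broad outline of your first three steps does match the architecture of the Shen--Vial memoir, but several details are off: the Fourier transform is the correspondence $e^L$, not $e^{L/2}$; the actual lift of the Beauville--Bogomolov class is $L=\tfrac{1}{3}\bigl(g_1^2+\tfrac{3}{2}g_1g_2+g_2^2-c_1-c_2\bigr)-I$, so your guessed correction by $g\times 1 + 1\times g$ has the wrong shape and sign on $I$, and it misses the Chern class $c$ of the tautological bundle; and $L$ satisfies a degree--two polynomial relation lifting the Beauville--Bogomolov quadratic form, not a nilpotence relation.

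The genuine gap is in your final step. You propose to spread the fixed cycle $\pi^X_k\circ\Delta_X^{sm}\circ(\pi^X_i\times\pi^X_j)$ over the moduli of cubic fourfolds and apply Voisin's spread principle to conclude fibrewise rational triviality for very general $b$. If that argument succeeded, it would prove that the Fano variety of lines on a very general cubic fourfold admits a full multiplicative Chow--K\"unneth decomposition, not merely a weak one -- and that is still an open problem, as remark \ref{pity} of the paper stresses. The approach is structurally obstructed: $X=F(Y)$ is not a hypersurface in a fixed ambient $M$ with trivial Chow groups, so the framework of lemma \ref{projbundle} and propositions \ref{voisin1}--\ref{voisin2} does not apply as stated; it concerns the relative self--product $\XX\times_B\XX$, not the triple fibre product; and even when the hypotheses hold, it concludes only that the fibrewise restriction comes from the ambient variety, not that it vanishes. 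Shen--Vial's proof of \cite[Theorem 3]{SV} is by entirely different means, namely explicit intersection--theoretic computations with the incidence correspondence and distinguished surfaces inside $X$, exploiting the fact that for very general $Y$ the group $H^{2,2}(Y)\cap H^4(Y,\QQ)$ has rank one.
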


\begin{proof} (A remark on notation: what we denote $A^i_{(j)}(X)$ is denoted $CH^i(X)_j$ in \cite{SV}.)

The existence of a CK decomposition $\{\pi^X_i\}$ is \cite[Theorem 3.3]{SV}, combined with the results in \cite[Section 3]{SV} to ensure that the hypotheses of \cite[Theorem 3.3]{SV} are satisfied. (Alternatively, the existence of a CK decomposition is also established in \cite[Proposition A.6]{FLV}.) According to \cite[Theorem 3.3]{SV}, the given CK decomposition agrees with the Fourier decomposition of the Chow groups. The ``moreover'' part is because the $\{\pi^X_i\}$ are shown to satisfy Murre's conjecture B \cite[Theorem 3.3]{SV}.

The statement for very general cubics is \cite[Theorem 3]{SV}.
    \end{proof}

\begin{remark}\label{pity} Unfortunately, it is not yet known that the Fourier decomposition of \cite{SV} induces a bigraded ring structure on the Chow ring for {\em all\/} Fano varieties of smooth cubic fourfolds. For one thing, it has not yet been proven that 
  \[  A^2_{(0)}(X)\cdot A^2_{(0)}(X)\ \stackrel{??}{\subset}\  A^4_{(0)}(X) \] 
  for the Fano variety of a given (not necessarily very general) cubic fourfold
  (cf. \cite[Section 22.3]{SV} for discussion). 
  
To prove that $A^\ast_{(\ast)}()$ is a bigraded ring for all Fano varieties of smooth cubic fourfolds, it would suffice to construct an MCK decomposition for the Fano variety of the very general cubic fourfold.
\end{remark}



\subsection{A multiplicative result}
\label{ss2}

Let $X$ be the Fano variety of lines on a smooth cubic fourfold. As we have seen (theorem \ref{fanomck}), the Chow ring of $X$ splits into pieces $A^i_{(j)}(X)$.
The work \cite{SV} contains a thorough analysis of the multiplicative behaviour of these pieces. Here are the relevant results we will be needing:

\begin{theorem}[Shen--Vial \cite{SV}]\label{chowringfano} Let $Y\subset\PP^5(\C)$ be a smooth cubic fourfold, and let $X:=F(Y)$ be the Fano variety of lines in $Y$. 

\noindent
(\rom1) There exists $\ell\in A^2_{(0)}(X)$ such that intersecting with $\ell$ induces an isomorphism
  \[ \cdot\ell\colon\ \ \ A^2_{(2)}(X)\ \xrightarrow{\cong}\ A^4_{(2)}(X)\ .\]

\noindent
(\rom2) Intersection product induces a surjection
  \[ A^2_{(2)}(X)\otimes A^2_{(2)}(X)\ \twoheadrightarrow\ A^4_{(4)}(X)\ .\]
\end{theorem}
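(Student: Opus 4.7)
Both claims describe how intersection products interact with the Fourier decomposition on $X=F(Y)$. The plan is to work with Shen--Vial's explicit description: there is an incidence correspondence $L\in A^2(X\times X)$ (essentially the class of the subvariety of pairs of intersecting lines, modified by suitable multiples of the Pl\"ucker polarization $g$, of $1\times g$ and $g\times 1$, and of the diagonal $\Delta_X$) whose exponential $\FF=\exp(L)$ induces the bigrading $A^\ast(X)=\bigoplus_{i,j} A^i_{(j)}(X)$.

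For part (\rom1), I would take $\ell$ to be a specific polynomial expression in $g$ and $c_2(X)$, normalized so that $\ell\in A^2_{(0)}(X)$ (such a class lifts the Pl\"ucker polarization and is algebraic in nature). Since $\ell\in A^2_{(0)}$, the partial multiplicative statements already established in \cite{SV} guarantee that intersection with $\ell$ sends $A^2_{(2)}(X)$ into $A^4_{(2)}(X)$. To show this map is an isomorphism, I would transfer the question across the Fourier transform: cup product with $\ell$ becomes a degree-shifting operator on the Fourier side, whose restriction to the relevant eigenspace is an isomorphism because at the level of cohomology the hard Lefschetz theorem provides an isomorphism $H^2(X)\xrightarrow{\cdot [\ell]}H^6(X)$ on the transcendental summand parameterizing $A^2_{(2)}$.

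For part (\rom2), the key input is a quadratic relation for $L$ in $A^4(X\times X)$ established in \cite{SV}, which expresses $L^2$ as an explicit combination of $L$, $g\times 1$, $1\times g$ and $\Delta_X$. Composing this identity with the small diagonal $\Delta_X^{sm}\in A^{8}(X\times X\times X)$, one can rewrite the product $a\cdot b$ for $a,b\in A^2_{(2)}(X)$ as the Fourier image of a specific expression built from $a$ and $b$. Since $A^4_{(4)}(X)$ is, by its Fourier characterization, the Fourier image of a particular eigenspace, this rewriting shows that every element of $A^4_{(4)}(X)$ arises in this way, giving the surjectivity of $A^2_{(2)}\otimes A^2_{(2)}\twoheadrightarrow A^4_{(4)}$.

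The main obstacle in this plan is establishing the precise cycle-level relations: the quadratic equation for $L$ in $A^4(X\times X)$, and the compatibility of intersection product with the Fourier bigrading on the specific pieces at hand. Both require careful intersection-theoretic calculations involving the incidence geometry of lines on a cubic fourfold and the Beauville--Donagi description of $F(Y)$; the cohomological analogues (hard Lefschetz and the ring structure of $H^\ast(X)$) are comparatively transparent, but upgrading them to identities modulo rational equivalence is the substantive work carried out in \cite{SV}.
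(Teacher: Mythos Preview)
The paper's own proof consists of two bare citations: statement (\rom1) is \cite[Theorem 4]{SV}, and statement (\rom2) is \cite[Proposition 20.3]{SV}. No independent argument is given, since this theorem is recorded as a preliminary result imported from Shen--Vial's memoir.

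Your proposal goes well beyond this: you attempt to sketch the actual mechanism behind the Shen--Vial results. That is a reasonable thing to do, and your narrative does touch the correct objects (the class $L\in A^2(X\times X)$ lifting the Beauville--Bogomolov form, the Fourier transform $\FF=e^L$, the class $\ell=(i_\Delta)^\ast L$, the quadratic relation for $L$). However, two points deserve flagging. First, your argument for (\rom1) rests on transporting hard Lefschetz through the Fourier transform; this gives the cohomological statement, but the Chow-level isomorphism $\cdot\ell\colon A^2_{(2)}\xrightarrow{\cong}A^4_{(2)}$ does not follow from hard Lefschetz alone---in \cite{SV} it is obtained from an explicit inverse built out of $L$ and its powers, using the cycle-theoretic quadratic relation rather than a cohomological input. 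Second, for (\rom2) the surjectivity in \cite[Proposition 20.3]{SV} is proved not by composing with the small diagonal as you suggest, but by identifying $A^4_{(4)}(X)$ with the image of $L^2_\ast$ and then using the relation $L^2=2\Delta_X-\tfrac{2}{25}(\ell_1+\ell_2)\cdot L-\tfrac{1}{23\cdot 25}(2\ell_1^2-23\ell_1\ell_2+2\ell_2^2)$ (or its analogue) to rewrite this image as products of classes in $A^2_{(2)}$.

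In short: the paper simply cites; your sketch outlines plausible ingredients but the specific deductions you propose (hard Lefschetz for (\rom1), small-diagonal composition for (\rom2)) are not quite the arguments Shen--Vial use, and would not by themselves close the gap between cohomology and Chow groups that you yourself identify at the end.
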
 
     
 \begin{proof} Statement (\rom1) is \cite[Theorem 4]{SV}. Statement (\rom2) is \cite[Proposition 20.3]{SV}.
  \end{proof}

  \subsection{The involution}
  
  \begin{lemma}\label{inv} Let $\iota_\PP\in\aut(\PP^5(\C))$ be the involution defined as
    \[  [X_0,X_1,\ldots,X_5]\ \mapsto\ [-X_0,-X_1,-X_2,X_3,X_4,X_5]\ .\]
  The cubic fourfolds invariant under $\iota_\PP$ are exactly those defined by an equation  
   \[ \begin{split} (X_0)^2 \ell_0(X_3,X_4,X_5)+  (X_1)^2\ell_1(X_3,X_4,X_5)+(X_2)^2\ell_2(X_3,X_4,X_5)+X_0 X_1  \ell_3(X_3,X_4,X_5)& \\ + X_0 X_2 \ell_4(X_3,X_4,X_5) +  
    X_1 X_2 \ell_5(X_3,X_4,X_5)  +g(X_3,\ldots,X_5)=0\ ,& \\
    \end{split}\]
    where the $\ell_i$ are linear forms and $g$ is a homogeneous degree $3$ polynomial. 
    
   Let $Y\subset\PP^5(\C)$ be a smooth cubic invariant under $\iota_\PP$, and let $\iota_Y\in\aut(Y)$ be the involution induced by $\iota_\PP$. Let $X=F(Y)$ be the Fano variety of lines in $Y$, and let $\iota\in\aut(X)$ be the involution induced by $\iota_Y$. The involution $\iota$ is anti--symplectic.
      \end{lemma}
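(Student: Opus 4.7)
The lemma comprises three assertions: (i) the explicit form of $\iota_\PP$--invariant cubics, (ii) the existence of an induced involution $\iota$ on $F(Y)$, and (iii) its anti--symplecticity. For (i), observe that $\iota_\PP$ acts on a degree--$3$ monomial $X_0^{a_0}\cdots X_5^{a_5}$ by the sign $(-1)^{a_0+a_1+a_2}$, so a cubic polynomial is $\iota_\PP$--invariant exactly when every monomial that appears satisfies $a_0+a_1+a_2\in\{0,2\}$. The case $a_0+a_1+a_2=0$ contributes an arbitrary cubic $g(X_3,X_4,X_5)$, while the case $a_0+a_1+a_2=2$ contributes combinations of the six quadratic monomials in $X_0,X_1,X_2$ with linear forms in $X_3,X_4,X_5$; grouping yields the displayed equation. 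For (ii), note that $F(Y)$ is the Hilbert scheme of lines on $Y$, so the linear automorphism $\iota_Y$ takes lines to lines and induces a morphism on $F(Y)$ that is clearly an involution.

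For (iii), my plan is to invoke the Beauville--Donagi Hodge isomorphism $H^4(Y,\C)_{prim}(1)\cong H^2(X,\C)_{prim}$ implemented by the incidence correspondence $P=\{(y,[L])\colon y\in L\}\subset Y\times X$. It restricts to an isomorphism $H^{3,1}(Y)\cong H^{2,0}(X)$, and $P$ is manifestly $(\iota_Y,\iota)$--stable, so it suffices to show that $\iota_Y^\ast$ acts as $-\ide$ on $H^{3,1}(Y)$. A generator of $H^{3,1}(Y)$ is obtained as the Poincar\'e residue on $Y$ of the meromorphic form $\Omega/F^2$ on $\PP^5$, where $F$ is the ($\iota_\PP$--invariant) defining equation and $\Omega=\sum_{i=0}^5(-1)^i X_i\,dX_0\wedge\cdots\wedge\widehat{dX_i}\wedge\cdots\wedge dX_5$. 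Because residue commutes with pullback by $\iota_\PP$ and $F^2$ is invariant, it is enough to check that $\iota_\PP^\ast\Omega=-\Omega$. This follows from a short sign count: in the $i$--th summand the factor $X_i$ contributes $-1$ iff $i\in\{0,1,2\}$, while the remaining wedge contributes $(-1)^{|\{0,1,2\}\setminus\{i\}|}$, and the product of these two signs is $-1$ in every case.

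None of these steps is a serious obstacle: (i) is combinatorial, (ii) is functorial, and (iii) reduces to the sign computation above once Beauville--Donagi is invoked. The only point deserving brief care is the $(\iota_Y,\iota)$--equivariance of the incidence correspondence, which is immediate from the fact that $\iota$ is defined precisely by transporting lines through $\iota_Y$.
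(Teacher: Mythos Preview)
Your proof is correct and follows essentially the same route as the paper: both treat parts (i) and (ii) as straightforward, and for (iii) both invoke the Beauville--Donagi isomorphism $H^4(Y)_{prim}\cong H^2(X)_{prim}$ (compatible with the involution via the incidence correspondence) and then check that $\iota_Y^\ast$ acts as $-\ide$ on $H^{3,1}(Y)$ by computing the sign of $\iota_\PP^\ast$ on the Griffiths residue form $\Omega/f^2$. Your write--up is somewhat more explicit about the sign bookkeeping and the equivariance of $P$, but there is no substantive difference in strategy.
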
   
      
   \begin{proof} The only thing that needs explaining is the last phrase; this is proven in \cite[Section 7]{Cam}. The idea is that there is an isomorphism of Hodge structures, compatible with the involution
   \[ H^2(X)\cong H^4(Y)\ .\]
   The action of $\iota_Y$ on $H^{3,1}(Y)$ is minus the identity, because $H^{3,1}(Y)$ is generated by the meromorphic form
   \[  \sum_{i=0}^5  (-1)^i X_i  {dX_0\wedge \ldots\wedge d\hat{X_i}\wedge\ldots\wedge dX_5 \over  f^2}\ ,\]
   where $f$ is an equation for $Y$.
   \end{proof}

 \subsection{Spread}
 \label{ssvois}
 
    \begin{lemma}[Voisin \cite{V0}, \cite{V1}]\label{projbundle} Let $M$ be a smooth projective variety of dimension $n+1$, and $L$ a very ample line bundle on $M$. Let 
    \[ \pi\colon \XX\to B\]
    denote a family of hypersurfaces, where $B\subset\vert L\vert$ is a Zariski open.
      Let
   \[   p\colon \wt{\XX\times_B \XX}\ \to\ \XX\times_B \XX\]
   denote the blow--up of the relative diagonal. 
 Then $\wt{\XX\times_B \XX}$ is Zariski open in $V$, where $V$ is a projective bundle over $\wt{M\times M}$, the blow--up of $M\times M$ along the diagonal.
   \end{lemma}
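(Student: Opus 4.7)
\emph{Proof plan.} My plan is to exhibit $V$ as the projectivisation $\PP(\EE)$ of a vector subbundle
\[ \EE\ \subset\ H^0(M,L)\otimes \OO_{\wt{M\times M}} \]
of rank $h^0(M,L)-2$, whose fibre at each point of $\wt{M\times M}$ consists of those sections of $L$ satisfying the appropriate incidence condition: at $(x,y)$ with $x\neq y$ the fibre should be $\{s: s(x)=s(y)=0\}$, while at a point $(x,[v])$ of the exceptional divisor $E\cong \PP(T_M)$ of $\sigma\colon \wt{M\times M}\to M\times M$ it should be $\{s: s(x)=0,\ ds_x(v)=0\}$.

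First I would define $\EE$ over the open set $U:=(M\times M)\setminus \Delta$ as the kernel of the evaluation
\[ \mathrm{ev}_U\colon\ H^0(M,L)\otimes \OO_U\ \longrightarrow\ p_1^\ast L\oplus p_2^\ast L,\qquad s\mapsto (p_1^\ast s,\,p_2^\ast s),\]
where $p_1,p_2\colon M\times M\to M$ are the projections. Since $L$ very ample separates distinct points, $\mathrm{ev}_U$ is surjective, $\EE|_U$ is a vector subbundle of the asserted rank, and $\PP(\EE|_U)\to U$ is a projective bundle receiving $\XX\times_B\XX|_U$ as the Zariski open subset of hypersurfaces additionally lying in $B\subset |L|$. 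To extend across $E$ I would use the sheaf of first-order principal parts $P^1(L)$ on $M$, together with its evaluation $j^1\colon H^0(M,L)\otimes\OO_M\twoheadrightarrow P^1(L)$, surjective because $L$ is very ample (hence $1$-very ample). At $(x,[v])\in E$, the natural rank-$2$ quotient of $P^1(L)_x$ determined by $[v]$ — extending the restriction $P^1(L)\twoheadrightarrow L$ by evaluation of the $L\otimes \Omega^1_M$-part on $v$ — composed with $j^1$ gives the desired rank-$2$ evaluation at $(x,[v])$, whose kernel is exactly $\{s\in H^0(M,L): s(x)=0,\ ds_x(v)=0\}$.

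The main obstacle is to splice these two fibrewise constructions into a single rank-$2$ quotient $\WW$ of $H^0(M,L)\otimes \OO_{\wt{M\times M}}$ globally on $\wt{M\times M}$. The cleanest way is to realise $\WW$ as a direct image on the blow-up of an appropriate coherent sheaf on a first-order thickening of the diagonal in $M\times M$ (twisted by $L$), and to verify that this direct image is locally free of rank $2$ with the prescribed fibres above $U$ and above $E$. Once $\WW$ is in hand, $\EE:=\ker(\mathrm{ev})$ and $V:=\PP(\EE)$ are the desired subbundle and projective bundle, of relative dimension $h^0(M,L)-3$ over $\wt{M\times M}$.

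Finally I would identify $\wt{\XX\times_B\XX}$ with a Zariski open subvariety of $V$. Off $E$ this is immediate from step one. Along $E$ it is the content of the universal property of the blow-up: a pair of distinct points of $X_b$ degenerating to a single point $x\in X_b$ is recorded in $\wt{\XX\times_B\XX}$ by its limiting tangent direction $[v]\in\PP(T_xX_b)\subset\PP(T_xM)$, and $X_b$ contains $x$ with tangent direction $[v]$ precisely when its defining equation $s$ lies in the fibre of $\EE$ over $(x,[v])$. The Zariski open condition $b\in B$ then cuts out $\wt{\XX\times_B\XX}$ inside $V$.
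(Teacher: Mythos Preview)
Your approach is correct and is essentially the same as the paper's, though the paper compresses it drastically. The paper simply defines
\[
V:=\bigl\{\,((x,y,z),\sigma)\ \big|\ \sigma|_{z}=0\,\bigr\}\ \subset\ \wt{M\times M}\times |L|,
\]
using that a point of $\wt{M\times M}$ is canonically a length-$2$ subscheme $z\subset M$ (two reduced points off the exceptional divisor, a tangent vector on it), and then observes that very ampleness makes $V\to\wt{M\times M}$ a projective bundle. In your language, the rank-$2$ quotient $\WW$ you are trying to build by gluing is nothing but $\pi_\ast(L|_{\Z})$ for the universal length-$2$ subscheme $\Z\subset \wt{M\times M}\times M$; it is locally free of rank $2$ by flatness of $\Z$ over $\wt{M\times M}$, and the evaluation $H^0(M,L)\otimes\OO\to\WW$ is surjective precisely because $L$ is very ample. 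This single construction renders the separate treatments over $U$ and over $E$, and the gluing step you flagged as the ``main obstacle'', unnecessary. Your ``cleanest way'' remark is already pointing at this, so you are one reformulation away from the paper's one-line proof.
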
 
  
  \begin{proof} This is \cite[Proof of Proposition 3.13]{V0} or \cite[Lemma 1.3]{V1}. The idea is to define $V$ as
   \[  V:=\Bigl\{ \bigl((x,y,z),\sigma\bigr) \ \vert\ \sigma\vert_z=0\Bigr\}\ \ \subset\ \wt{M\times M}\times \vert L\vert\ .\]
   The very ampleness assumption ensures $V\to\wt{M\times M}$ is a projective bundle.
    \end{proof}

  This is used in the following key proposition: 
   
     \begin{proposition}[Voisin \cite{V1}]\label{voisin1} Assumptions as in lemma \ref{projbundle}. Assume moreover $M$ has trivial Chow groups. Let $R\in A^n(V)_{}$. Suppose that for all $b\in B$ one has
    \[ H^n(X_b)_{prim}\not=0\ \ \ \ 
  \hbox{and}\ \ \ \ 
     R\vert_{\wt{X_b\times X_b}}=0\ \ \in H^{2n}(\wt{X_b\times X_b})\ .\]
   Then there exists $\gamma\in A^n(M\times M)_{}$ such that
    \[     (p_b)_\ast \bigl(R\vert_{\wt{X_b\times X_b}}\bigr)= \gamma\vert_{X_b\times X_b}  \ \ \in A^{n}({X_b\times X_b})_{}\]  
    for all $b\in B$. 
   (Here $p_b$ denotes the restriction of $p$ to $\wt{X_b\times X_b}$, which is the blow--up of $X_b\times X_b$ along the diagonal.)
    \end{proposition}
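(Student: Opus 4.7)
The plan is to invoke Voisin's standard spread argument, exploiting the projective-bundle structure $p\colon V\to\wt{M\times M}$ from Lemma \ref{projbundle} together with the triviality of $A^*(M)$.

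First, I would observe that $\wt{M\times M}$ has trivial Chow groups: $M\times M$ does, by K\"unneth, and blowing up along the smooth subvariety $\Delta_M\cong M$ (itself with trivial Chow groups) preserves this property. Consequently $V$, being a projective bundle over $\wt{M\times M}$, also has trivial Chow groups, and the projective bundle formula gives
\[ A^n(V)\;=\;\bigoplus_{k=0}^r p^*A^{n-k}(\wt{M\times M})\cdot h^k,\qquad h=c_1(\OO_V(1)), \]
so that any class $R\in A^n(V)$ admits a decomposition $R=\sum_k p^*(\alpha_k)\cdot h^k$ for uniquely determined $\alpha_k\in A^{n-k}(\wt{M\times M})$.

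Next I would build the candidate $\gamma\in A^n(M\times M)$ from the $\alpha_k$ via the blow-down $q\colon\wt{M\times M}\to M\times M$: the parts of each $\alpha_k$ lying in $q^*A^*(M\times M)$ contribute directly, and the parts supported on the exceptional divisor $E\to\Delta_M$ (a projective bundle over $M\cong\Delta_M$) contribute via the diagonal embedding. A direct computation---restricting to the fiber $\wt{X_b\times X_b}$ of the second projection $V\to|L|$, using the explicit description of $\OO_V(1)|_{\wt{X_b\times X_b}}$ in terms of the embedding $X_b\hookrightarrow M$, and applying the projection formula along $p_b$---should then yield an equality
\[ (p_b)_*\bigl(R|_{\wt{X_b\times X_b}}\bigr) \;=\; \gamma|_{X_b\times X_b}+\varepsilon_b \qquad\text{in } A^n(X_b\times X_b), \]
with $\gamma$ independent of $b$ and a correction term $\varepsilon_b$ that a priori depends on $b$.

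The final step is to show $\varepsilon_b=0$. The cohomological hypothesis $R|_{\wt{X_b\times X_b}}=0\in H^{2n}$ forces $\varepsilon_b$ to vanish cohomologically on $X_b\times X_b$; upgrading this to rational equivalence is where the assumption $H^n(X_b)_{prim}\neq 0$ becomes essential. This is the standard Voisin lever: the non-vanishing of primitive cohomology ensures that a universal cohomological vanishing on all fibers actually pins down the class in $A^n(V)$ modulo terms whose fiberwise pushforwards all come from a single class on $M\times M$. I expect this last step---translating fiberwise cohomological vanishing into Chow-theoretic equality---to be the main obstacle; the earlier steps are formal manipulation with projective bundles and blow-ups.
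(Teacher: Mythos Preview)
The paper's proof consists entirely of the citation ``This is \cite[Proposition 1.6]{V1}'', so there is no in-paper argument to compare against. Your outline is essentially Voisin's original argument from \cite{V1}: projective-bundle decomposition of $A^n(V)$ over $\wt{M\times M}$ (which has trivial Chow groups since $M$ does), then a fibrewise analysis.

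One refinement worth making to your last paragraph: the passage from cohomological to rational-equivalence vanishing does \emph{not} take place on the individual fibres $X_b\times X_b$ (where such an upgrade would be hopeless in general). It happens on $V$ itself. Because $V$ has trivial Chow groups, the cycle class map $A^n(V)\to H^{2n}(V,\QQ)$ is an isomorphism, so pinning down the cohomology class of $R$ already determines $R$ as a Chow class. The hypothesis $H^n(X_b)_{prim}\neq 0$ then enters exactly where you suspect, but phrased on $V$: when one analyses which classes in $H^{2n}(V)$ restrict to zero on every fibre $\wt{X_b\times X_b}$, the non-vanishing of primitive cohomology ensures that the class of the relative diagonal (coming from the exceptional divisor of $\wt{M\times M}\to M\times M$) is cohomologically independent from the classes pulled back from $M\times M$. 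This forces the corresponding coefficient in the projective-bundle decomposition of $R$ to vanish, and what remains pushes down to a restriction $\gamma\vert_{X_b\times X_b}$ with $\gamma\in A^n(M\times M)$ independent of $b$. With that adjustment your sketch matches the standard proof.
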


\begin{proof} This is \cite[Proposition 1.6]{V1}.
\end{proof}

 The following is an equivariant version of proposition \ref{voisin1}:
 
  \begin{proposition}[Voisin \cite{V1}]\label{voisin2} Let $M$ and $L$ be as in proposition \ref{voisin1}. Let $G\subset\aut(M)$ be a finite group. Assume the following:
  
  \noindent
  (\rom1) The linear system $\vert L\vert^G:=\PP\bigl( H^0(M,L)^G\bigr)$ has no base--points, and the locus of points in $\wt{M\times M}$ parametrizing triples $(x,y,z)$ such that the length $2$ subscheme $z$ imposes only one condition on $\vert L\vert^G$ is contained in the union of (proper transforms of) graphs of non--trivial elements of $G$, plus some loci of codimension $>n+1$.
  
  \noindent
  (\rom2) Let $B\subset\vert L\vert^G$ be the open parametrizing smooth hypersurfaces, and let $X_b\subset M$ be a hypersurface for $b\in B$ general. There is no non--trivial relation
   \[ {\displaystyle\sum_{g\in G}} c_g \Gamma_g +\gamma=0\ \ \ \hbox{in}\ H^{2n}(X_b\times X_b)\ ,\]
   where $c_g\in\QQ$ and $\gamma$ is a cycle in $\ima\bigl( A^n(M\times M)\to A^n(X_b\times X_b)\bigr)$.
   
   Let $R\in A^n(\XX\times_B \XX)$ be such that
     \[  R\vert_{{X_b\times X_b}}=0\ \ \in H^{2n}({X_b\times X_b})\ \ \ \forall b\in B\ .\]
    Then there exists $\gamma\in A^n(M\times M)_{}$ such that
    \[     R\vert_{{X_b\times X_b}}= \gamma\vert_{X_b\times X_b}  \ \ \in A^{n}({X_b\times X_b})\ \ \ \forall b\in B\ .\]  
  \end{proposition}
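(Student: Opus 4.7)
The plan is to extend Voisin's argument for Proposition \ref{voisin1} to the smaller $G$-invariant linear system. The key geometric ingredient is the equivariant analog of Lemma \ref{projbundle}: set
\[ V^G := \Bigl\{\bigl((x,y,z),\sigma\bigr)\in\wt{M\times M}\times\vert L\vert^G :\ \sigma\vert_z=0\Bigr\}, \]
with projection $q\colon V^G\to\wt{M\times M}$, so that $\wt{\XX\times_B\XX}$ is a Zariski open inside $V^G$. Let $U\subset\wt{M\times M}$ be the open locus where the length-$2$ scheme $z$ imposes two independent conditions on $\vert L\vert^G$. Over $U$ the map $q$ is a projective bundle of rank $N=\dim\vert L\vert^G-2$, and by hypothesis (i), $\wt{M\times M}\setminus U$ is contained in $\bigcup_{g\neq 1}\wt{\Gamma_g}\cup S$ with $\codim_{\wt{M\times M}}S>n+1$.

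Pick an arbitrary extension $\widehat R\in A^n(V^G)$ of $R$. The projective bundle formula on $q^{-1}(U)$ gives
\[ \widehat R\vert_{q^{-1}(U)}=\sum_{i=0}^N q^*(\alpha_i)\cdot h^i,\qquad h:=c_1(\OO_{V^G}(1)),\ \alpha_i\in A^{n-i}(U).\]
Since the fibres of the second projection $V^G\to\vert L\vert^G$ are mapped to a single point in $\vert L\vert^G$, the class $h$ restricts trivially to every $\wt{X_b\times X_b}$; hence upon fibre-restriction only the $i=0$ term survives. Extending $\alpha_0$ to a class $\overline\alpha_0\in A^n(\wt{M\times M})$ and examining the residual error on $q^{-1}(\wt{M\times M}\setminus U)$, hypothesis (i) forces this error to be, up to irrelevant cycles on $q^{-1}(S)$, a finite $\QQ$-linear combination $\sum_{g\neq 1}c_g\,[q^{-1}(\wt{\Gamma_g})]$. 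For $b\in B$ general, restricting to $\wt{X_b\times X_b}$ and pushing down along the blow-down $p_b$ therefore yields
\[ R\vert_{X_b\times X_b}=\gamma\vert_{X_b\times X_b}+\sum_{g\neq 1}c_g\,\Gamma_g\vert_{X_b\times X_b}\quad\text{in}\ A^n(X_b\times X_b),\]
with $\gamma\in A^n(M\times M)$ the descent of $\overline\alpha_0$ (any exceptional-divisor ambiguity on $\wt{M\times M}$ vanishes upon restriction to a general fibre by a codimension count); similarly, the bound $\codim S>n+1$ forces $\wt{X_b\times X_b}\cap S$ to have codimension $>n$ in $\wt{X_b\times X_b}$ for generic $b$, so the $q^{-1}(S)$-contribution is genuinely invisible after restriction. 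Taking cycle classes in $H^{2n}(X_b\times X_b)$, the assumption $[R\vert_{X_b\times X_b}]=0$ combined with hypothesis (ii) forces $c_g=0$ for all $g\neq 1$. Thus $R\vert_{X_b\times X_b}=\gamma\vert_{X_b\times X_b}$ for $b$ general, and a standard spreading argument over the irreducible base $B$ extends the identity to every $b\in B$.

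The main obstacle is the extension-to-the-bad-locus step: one must show that, modulo cycles supported where they cannot contribute in codimension $n$, the projective-bundle decomposition on $q^{-1}(U)$ differs from a cycle pulled back from $M\times M$ by only a rational combination of the graph classes $\wt{\Gamma_g}$. The codimension bound $\codim S>n+1$ in hypothesis (i) is engineered precisely to make the $q^{-1}(S)$-contribution vanish upon restriction to a general $\wt{X_b\times X_b}$, leaving an expression in the rigid form to which hypothesis (ii) can be applied to annihilate the graph-coefficients cohomologically; this last cohomological input is the only ingredient beyond the argument for Proposition \ref{voisin1} that this equivariant version genuinely requires.
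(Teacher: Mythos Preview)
Your argument is correct and follows essentially the same route as the paper's proof (which in turn defers to Voisin \cite{V1}): both introduce the incidence variety $V^G\subset\wt{M\times M}\times\vert L\vert^G$, reduce via the projective--bundle structure to the key identity
\[ R\vert_{X_b\times X_b}=\gamma\vert_{X_b\times X_b}+\sum_{g}\lambda_g\,\Gamma_g\ \ \ \hbox{in}\ A^n(X_b\times X_b)\ ,\]
and then invoke hypothesis (\rom2) to kill the $\lambda_g$. The one technical difference is that the paper (following \cite{V1}) blows up the graphs $\wt{\Gamma_g}$ and the small locus $S$ to obtain $M^\prime\to\wt{M\times M}$ over which the base--changed family $V^\prime\to M^\prime$ is a genuine projective bundle, and then reads off the graph contributions from the exceptional divisors via the blow--up formula; you instead apply the projective--bundle formula only over the open $U$ and control the residual error by a localization/codimension count. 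Both approaches yield the same formula, and your dimension count is correct (indeed $q^{-1}(S)$ already has dimension strictly below $n+2+N$, so its contribution in $A^n(V^G)$ is zero even before restriction---slightly stronger than your ``invisible after restriction''). The blow--up packaging has the advantage that the projective--bundle formula applies globally and the bookkeeping of multiplicities when restricting $[q^{-1}(\wt{\Gamma_g})]$ to a fibre is absorbed into the standard blow--up formula; your approach is more direct but requires exactly the care you flag in your final paragraph.
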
 
  
 \begin{proof} This is not stated verbatim in \cite{V1}, but it is contained in the proof of \cite[Proposition 3.1 and Theorem 3.3]{V1}. Let us very briefly review the argument.
 One considers the variety
   \[  V:=\Bigl\{ \bigl((x,y,z),\sigma\bigr) \ \vert\ \sigma\vert_z=0\Bigr\}\ \ \subset\ \wt{M\times M}\times \vert L\vert^G\ .\]   
   The problem is that $V$ is no longer a projective bundle over $\wt{M\times M}$. However, as explained in the proof of \cite[Theorem 3.3]{V1}, hypothesis (\rom1) ensures that one 
   can obtain a projective bundle after blowing up the graphs $\Gamma_g, g\in G$ plus some loci of codimension $>n+1$. Let $M^\prime\to\wt{M\times M}$ denote the result of these blow--ups, and let $V^\prime\to M^\prime$ denote the projective bundle obtained by base--change. 

Analyzing the situation as in \cite[Proof of Theorem 3.3]{V1}, one obtains
   \[ R\vert_{X_b\times X_b} =R_0\vert_{X_b\times X_b}+ {\displaystyle\sum_{g\in G}} \lambda_g \Gamma_g\ \ \ \hbox{in}\ A^n(X_b\times X_b) \ ,\]
   where $R_0\in A^n(M\times M)$ and $\lambda_g\in\QQ$ (this is \cite[Equation (15)]{V1}).
   By assumption, $R\vert_{X_b\times X_b}$ is homologically trivial. In view of hypothesis (\rom2), this implies that all $\lambda_g$ must be $0$.   
     \end{proof}

\section{Main result}

This section contains the proof of the main result of this note, theorem \ref{main}. The proof is split in two parts. In the first part, we prove a statement (theorem \ref{main2}) about the action of the involution on $1$--cycles on the cubic $Y$. The proof is an application of the technique of ``spread'' of cycles in a family, as developed by Voisin \cite{V0}, \cite{V1}, \cite{V8}, \cite{Vo} (more precisely, the results recalled in subsection \ref{ssvois}).

In the second part, we deduce from this our main result, theorem \ref{main}. This second part builds on the structural results of Shen--Vial \cite{SV} (notably the results recalled in subsections \ref{ss1} and \ref{ss2}).
     
   \subsection{First part} 
   
   \begin{theorem}\label{main2}  Let $Y\subset\PP^5(\C)$ be a smooth cubic fourfold defined by an equation
    \[ \begin{split} (X_0)^2 \ell_0(X_3,X_4,X_5)+  (X_1)^2\ell_1(X_3,X_4,X_5)+(X_2)^2\ell_2(X_3,X_4,X_5)+X_0 X_1  \ell_3(X_3,X_4,X_5)& \\ + X_0 X_2 \ell_4(X_3,X_4,X_5) +  
    X_1 X_2 \ell_5(X_3,X_4,X_5)  +g(X_3,\ldots,X_5)=0\ ,& \\
    \end{split}\]
    where the $\ell_i$ are linear forms and $g$ is a homogeneous degree $3$ polynomial.   
   
    Let $\iota_Y\in\aut(Y)$ be the involution of lemma \ref{inv}. Then
    \[  (\iota_Y)^\ast = -\ide\colon\ \ \ A^3_{hom}(Y)\ \to\ A^3(Y)\ .\]
    \end{theorem}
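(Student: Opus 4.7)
The plan is to apply Voisin's method of ``spread'' (Proposition \ref{voisin2}) to the family $\pi\colon\YY\to B$ of smooth $\iota_\PP$-invariant cubic fourfolds, where $B$ is the Zariski open of smooth members of the invariant linear system $\vert\OO_{\PP^5}(3)\vert^{\iota_\PP}$. The statement is equivalent to the vanishing of the self-correspondence $\Gamma_{\iota_Y}+\Delta_Y\in A^4(Y\times Y)$ acting on $A^3_{hom}(Y)$. The goal is to exhibit this correspondence, after a suitable correction, as a cycle rationally equivalent to a pull-back from an ambient variety with trivial Chow groups, whose action on $A^3_{hom}$ will then vanish by a degree computation.

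The first step is to construct a correcting cycle. Since $\iota_Y$ is anti-symplectic (Lemma \ref{inv}), we have $\iota_Y^*=-\ide$ on the transcendental part $H^4_{tr}(Y)$; consequently $\Gamma_{\iota_Y}+\Delta_Y$ represents cohomologically the zero endomorphism on $H^4_{tr}(Y)$ and twice the identity on the invariant algebraic cohomology $H^*_{\mathrm{alg}}(Y)^\iota$. I build a cycle $\Gamma_{\mathrm{alg}}\in A^4(\YY\times_B\YY)$ representing (on each fiber) the K\"unneth projector onto $H^*_{\mathrm{alg}}(Y)^\iota$, assembled from constant algebraic cycles: the powers $H_Y^i$ pulled back from $\PP^5$, the common plane $\Pi:=V(X_3,X_4,X_5)\subset\PP^5$ contained in every invariant cubic (any $\iota_\PP$-invariant degree-$3$ monomial has positive degree in $X_3,X_4,X_5$), and any further constant algebraic classes needed to span the invariant part of $H^4(Y_b)$. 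The corrected correspondence
\[ R := \Gamma_{\iota_Y}+\Delta_Y-2\,\Gamma_{\mathrm{alg}} \;\in\; A^4(\YY\times_B\YY) \]
is then fiberwise cohomologically trivial. A direct degree computation shows $(\Gamma_{\mathrm{alg}})_*$ vanishes on $A^3_{hom}(Y)$: each summand has the form $\alpha\times\beta$ with $\alpha$ of positive codimension on $Y$, acting on $\xi$ by $\xi\mapsto\deg(\xi\cdot\alpha)\,\beta$, and $\deg(\xi\cdot\alpha)=0$ for hom-trivial $\xi$. Thus it suffices to prove $R_*=0$ on $A^3_{hom}(Y)$.

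Next I apply Voisin's spread. The main technical obstacle is that $\vert\OO_{\PP^5}(3)\vert^{\iota_\PP}$ has base locus exactly $\Pi$, so hypothesis (i) of Proposition \ref{voisin2} fails literally with $M=\PP^5$. Two natural remedies are available: either (a) replace $M$ by the blow-up $\tilde M:=\mathrm{Bl}_\Pi\PP^5$, which retains trivial Chow groups (by the blow-up formula) and on which the strict transform of the linear system becomes base-point free; or (b) adapt Proposition \ref{voisin2} directly to absorb the constant base locus $\Pi$ into a ``bad locus'', as done in the companion paper \cite{I} for the only other family of cubic fourfolds with a polarized anti-symplectic involution. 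Hypothesis (ii), the cohomological independence of $\Gamma_{\iota_\PP}\vert_{Y_b\times Y_b}$ from cycles pulled back from $\tilde M\times\tilde M$, is immediate: $\iota_Y^*$ acts as $-\ide$ on $H^4_{tr}(Y_b)\neq 0$, whereas cycles from a Chow-trivial ambient act trivially on transcendental cohomology. The spread then yields $\gamma\in A^4(\tilde M\times\tilde M)$ with $R\vert_{Y_b\times Y_b}=\gamma\vert_{Y_b\times Y_b}$ in $A^4(Y_b\times Y_b)$ for (very) general $b\in B$. Since $\tilde M$ has trivial Chow groups, $\gamma$ is a $\QQ$-linear combination of products of algebraic classes, so $\gamma_*$ vanishes on $A^3_{hom}(Y_b)$ by the same degree computation as above; combined with $(\Gamma_{\mathrm{alg}})_*=0$, this gives $(\iota_Y^*+\ide)_*=0$ on $A^3_{hom}(Y_b)$ for general $b$, and a standard specialization extends the vanishing to every smooth $\iota_\PP$-invariant cubic $Y$. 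The principal difficulty throughout is the twofold role of the plane $\Pi$ --- as base locus of the invariant linear system and as a constant algebraic class in $H^4(Y)$ --- which must be simultaneously accommodated in the construction of $\Gamma_{\mathrm{alg}}$ and in Voisin's spread framework.
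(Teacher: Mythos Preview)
Your overall strategy coincides with the paper's: construct a relative correspondence that is fibrewise homologically trivial, then apply Voisin's equivariant spread (Proposition~\ref{voisin2}) and observe that the resulting ambient cycle acts trivially on $A^3_{hom}$. However, there are two genuine gaps in your execution.

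\medskip

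\textbf{The verification of hypothesis (\rom2) is incomplete.} You only argue that $\Gamma_{\iota}$ is not a restriction from $\tilde M\times\tilde M$, using that $\iota_Y^\ast=-\ide$ on $H^4_{tr}(Y_b)$. But hypothesis (\rom2) asks that there be no nontrivial relation $c\,\Delta_{Y_b}+d\,\Gamma_{\iota}+\gamma=0$ in $H^8(Y_b\times Y_b)$. Looking at the action on $H^4_{tr}$ only gives $c=d$; it does \emph{not} exclude $c=d\neq 0$, since $\Delta_{Y_b}+\Gamma_{\iota}$ acts trivially on the transcendental part. The paper closes this by also examining the action on $(H^4(Y_b)_{prim})^{\iota}$, which is shown to be nonzero (Lemma~\ref{trace}, using either Griffiths residues or the plane $\Pi$): there one gets $c+d=0$, hence $c=d=0$. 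In your blown-up setting this step is even more delicate, because classes from $\tilde M$ now restrict nontrivially to $(H^4_{prim})^{\iota}$; the point that saves you is a dimension count (the $\iota$-invariant part of $H^4(Y_b)$ is $11$-dimensional, far larger than what restrictions from $\tilde M$ can span), but you do not make this argument.

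\medskip

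\textbf{The construction of $\Gamma_{\mathrm{alg}}$ is not justified.} You propose to build the projector onto $H^\ast_{\mathrm{alg}}(Y_b)^{\iota}$ out of ``constant'' algebraic classes $H_Y^i$, $[\Pi]$, ``and any further constant algebraic classes needed''. But $(H^4(Y_b))^{\iota}$ is $11$-dimensional, and it is not at all clear that a basis consisting of cycle classes defined uniformly over the whole family $B$ exists. The paper sidesteps this entirely: rather than an explicit construction, it invokes Voisin's spreading result for decomposed cycles \cite[Proposition 3.7]{V0} to produce a relative ``completely decomposed'' $\gamma\in A^4(\YY\times_B\YY)$ with the required fibrewise cohomological property. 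This is the robust way to obtain the correcting cycle.

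\medskip

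On the base-locus issue you raise: the paper does \emph{not} blow up $\Pi$. Instead it works with $M=\PP^5$ and analyses the locus of length-$2$ subschemes imposing only one condition via the quotient $\PP^5\to\PP(2^3,1^3)$ and the auxiliary involutions $\iota_0,\iota_1,\iota_2$ (Lemmas~\ref{ok} and \ref{delorme}); the extra graphs $\Gamma_{\iota_0^{r_0}\iota_1^{r_1}\iota_2^{r_2}}$ are shown, generically, still to impose two conditions. Your blow-up alternative is plausible but would require you to re-verify both hypotheses of Proposition~\ref{voisin2} for $\tilde M$, which you have not done.
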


  \begin{proof} We have seen (proof of lemma \ref{inv}) that
    \[ (\iota_Y)^\ast =-\ide\colon\ \ \ H^{3,1}(Y)\ \to\ H^{3,1}(Y)\ .\]
  Let $H^4_{tr}(Y)$ denote the orthogonal complement (under the cup--product pairing) of $N^2 H^4(Y)$ (which coincides with $H^{2,2}(Y,\QQ)$ since the Hodge conjecture is true for $Y$). Since $H^4_{tr}(Y)\subset H^4(Y)$ is the smallest Hodge substructure containing $H^{3,1}(Y)$, we must also have
   \begin{equation}\label{van} (\iota_Y)^\ast =-\ide\colon\ \ \ H^{4}_{tr}(Y)\ \to\ H^{4}_{tr}(Y)\ .\end{equation}
   This implies that there is a decomposition
   \begin{equation}\label{fibrewise} {}^t \Gamma_{\iota_Y} = -\Delta_Y +\gamma\ \ \ \hbox{in}\ H^8(Y\times Y)\ ,\end{equation}
   where $\gamma\in A^4(Y\times Y)$ is a ``completely decomposed'' cycle, i.e.
    \[ \gamma=\gamma_0 + \gamma_2 +\gamma_4 +\gamma_6 +\gamma_8\ ,\]
    and $\gamma_{2i}$ has support on $V_i\times W_i\subset Y\times Y$ with $\dim V_i=i$ and $\dim W_i=4-i$.
    (Indeed, the cycle $\gamma$ is obtained by considering
      \[ \gamma_{2i}:= ({}^t \Gamma_{\iota_Y} +\Delta_Y)\circ \pi_{2i}\ \ \ \in H^8(Y\times Y)\ ,\]
      where $\pi_i$ denotes the K\"unneth component.
    For $i\not=4$, the claimed support condition is obviously satisfied since it is satisfied by $\pi_i$. For $i=4$, one uses (\ref{van}) to see that $\gamma_4$ is supported on $N^2 H^4(Y)\otimes N^2 H^4(Y)\subset H^8(Y\times Y)$.)
    
  We now consider things family--wise. Let 
    \[ \YY\ \to\ B \]
    denote the universal family of all smooth cubic fourfolds defined by an equation as in theorem \ref{main2}. Let $Y_b\subset\PP^5(\C)$
    denote the fibre over $b\in B$.
    
    The involution $\iota_\PP$ defines an involution $\iota_\YY\in\aut(\YY)$ by restriction. Let $\Delta_\YY\in A^4(\YY\times_B \YY)$ denote the relative diagonal. Obviously the argument leading to the decomposition (\ref{fibrewise}) applies to each fibre $Y_b$. This means that for each $b\in B$, there exists a completely decomposed cycle $\gamma_b\in A^4(Y_b\times Y_b)$ such that
    \[  \bigl({}^t \Gamma_{\iota_\YY} +\Delta_\YY\bigr)\vert_{Y_b\times Y_b} =\gamma_b\ \ \ \hbox{in}\ H^8(  Y_b\times Y_b)\ .\]
    Applying the ``spread'' result \cite[Proposition 3.7]{V0}, we can find a ``completely decomposed'' relative correspondence $\gamma\in A^4(\YY\times_B \YY)$
    such that
    \[  \bigl({}^t \Gamma_{\iota_\YY} +\Delta_\YY-\gamma\bigr)\vert_{Y_b\times Y_b} =0\ \ \ \hbox{in}\ H^8(  Y_b\times Y_b)\ \ \ \forall b\in B\ .\]
    (By this, we mean the following: there exist subvarieties $\VV_i, \WW_i\subset \YY$ for $i=0,2,4,6,8$ with 
      \[\codim \VV_i +\codim \WW_i=4\ ,\] 
      and such that the cycle $\gamma$ is
    supported on
      \[ \cup_i \VV_i\times_B \WW_i\ \ \ \subset \YY\times_B \YY\ .\]
      Actually, for $i\not=4$ this is obvious since the $\pi_i, i\not=4$ obviously exist relatively. The recourse to \cite[Proposition 3.7]{V0} can thus be limited to $i=4$.)
      
   That is, the relative correspondence
    \[ \Gamma:=    {}^t \Gamma_{\iota_\YY} +\Delta_\YY-\gamma \ \ \ \in A^4(\YY\times_B \YY) \]
    is fibrewise homologically trivial:
    \[ \Gamma\vert_{Y_b\times Y_b}=0\ \ \ \hbox{in}\ H^8(Y_b\times Y_b)\ \ \ \forall b\in B\ .\]
  
  At this point, we note that the family $\YY\to B$ is large enough to verify the hypotheses of proposition \ref{voisin2}; this will be proven in lemma \ref{ok} below. Applying proposition \ref{voisin2} to the relative correspondence $\Gamma$, we find that there exists $\delta\in A^4(\PP^5\times \PP^5)$ such that
   \[  \Gamma\vert_{Y_b\times Y_b} +\delta\vert_{Y_b\times Y_b} =0\ \ \ \hbox{in}\ A^4(Y_b\times Y_b)\ \ \ \forall b\in B\ .\]       
   But  
    \[   (\delta\vert_{Y_b\times Y_b})_\ast =0\colon\ \ \ A^3_{hom}(Y_b)\ \to\ A^3(Y_b)\ \ \ \forall b\in B \]
    (indeed, the action factors over $A^4_{hom}(\PP^5)$ which is $0$). Also, we have
    \[   (\gamma\vert_{Y_b\times Y_b})_\ast =0\colon\ \ \ A^3_{hom}(Y_b)\ \to\ A^3(Y_b)\ \ \ \hbox{for\ general\ } b\in B \]
    (indeed, for general $b\in B$ the restriction $\gamma\vert_{Y_b\times Y_b}$ is a completely decomposed cycle; such cycles do not act on $A^3_{hom}$ for dimension reasons).
    
  By definition of $\Gamma$, this means that
  \[ \bigl(  {}^t \Gamma_{\iota_{Y_b}} +\Delta_{Y_b}\bigr){}_\ast=0\colon\ \ \ A^3_{hom}(Y_b)\ \to\ A^3(Y_b)\ \ \ \hbox{for\ general\ } b\in B\ . \]
  This proves theorem \ref{main2} for general $b\in B$. To extend to all $b\in B$, one can reason as in \cite[Lemma 3.1]{LFu2}.
  
  It only remains to check that the hypotheses of Voisin's result are satisfied:
  
  \begin{lemma}\label{ok} Let $\YY\to B$ be the family of smooth cubic fourfolds as in theorem \ref{main2}, i.e.
    \[ B\ \subset\ \Bigl(\PP H^0\bigl(\PP^5,\OO_{\PP^5}(3)\bigr)\Bigr)^G \]
    is the open subset parametrizing smooth $G$--invariant cubics, where $G=\{id,\iota_\PP\}\subset\aut(\PP^5)$ is as above. This set--up verifies the hypotheses of proposition \ref{voisin2}.  
  \end{lemma}
  
 \begin{proof} 
 Let us first prove that hypothesis (\rom1) of proposition \ref{voisin2} is satisfied. 
  
  To this end, we consider the quotient morphisms
   \[ p\colon\ \ \PP^5\ \to\ P:=\PP^5/G\ \to\      P^\prime:=\PP(2^3,1^3)=\PP^5/(\ZZ/2\ZZ\times \ZZ/2\ZZ\times \ZZ/2\ZZ)\ ,\]
   where $P^\prime:=\PP(2^3,1^3)$ denotes a weighted projective space. 
   
   Let us write $\iota_0, \iota_1, \iota_2$ for the involutions of $\PP^5$ 
   \[  \begin{split}  &\iota_0 [X_0:\ldots:X_5] := [-X_0:X_1:\ldots:X_5]\ ,\\
                               &\iota_1 [X_0:\ldots:X_5] := [X_0:-X_1:X_2: \ldots :X_5]\ ,\\
                            &\iota_2 [X_0:\ldots:X_5] := [X_0:X_1:-X_2:X_3:X_4:X_5]\ .\\
                        \end{split} \] 
      (We note that $\iota_\PP=\iota_0\circ\iota_1\circ\iota_2$, and the weighted projective space $P^\prime$ is $\PP^5/ <\iota_0,\iota_1,\iota_2>$.)

   The sections in $\bigl(\PP H^0\bigl(\PP^5,\OO_{\PP^5}(3)\bigr)\bigr)^G$ are in bijection with sections coming from $P$, and contain the sections coming from $P^\prime$:  
    \[  \Bigl(\PP H^0\bigl(\PP^5,\OO_{\PP^5}(3)\bigr)\Bigr)^G \ \supset\  \PP H^0\bigl( P^\prime,\OO_{P^\prime}(3)\bigr)\ .\]  
   
   Let us now assume $x,y\in\PP^5$ are two points such that
   \[ (x,y)\not\in \Delta_{\PP^5}\cup  \bigcup_{0\le r_0,r_1,r_2\le 1}\Gamma_{(\iota_0)^{r_0}\circ (\iota_1)^{r_1}\circ (\iota_2)^{r_2}}\ .\]
   Then 
   \[ p(x)\not=p(y)\ \ \ \hbox{in}\ P^\prime\ ,\]
   and so (using lemma \ref{delorme} below) there exists $\sigma\in\PP H^0\bigl(P^\prime,\OO_{P^\prime}(3)\bigr)$ containing $p(x)$ but not $p(y)$. The pullback $p^\ast(\sigma)$ contains $x$ but not $y$, and so these points $(x,y)$ impose $2$ independent conditions on  $ \bigl(\PP H^0\bigl(\PP^5,\OO_{\PP^5}(3)\bigr)\bigr)^G$.
   
It remains to check that a generic element 
  \[  (x,y)\in \bigcup_{0\le r_0,r_1,r_2\le 1}\Gamma_{(\iota_0)^{r_0}\circ (\iota_1)^{r_1}\circ (\iota_2)^{r_2}}\setminus  \Gamma_{\iota_\PP} \]
  also imposes $2$ independent conditions. Let us first assume $(x,y)$ is generic on $\Gamma_{\iota_0}$. Let us write $x=[a_0:a_1:\ldots:a_5]$. By genericity, we may assume all $a_i$ are $\not=0$ (intersections of $\Gamma_{\iota_0}$ with a coordinate hyperplane have codimension $>n+1$ and so need not be considered for hypothesis (\rom1) of proposition \ref{voisin2}). We can thus write
   \[ x=[a_0:a_1:a_2:a_3:a_4:a_5]\ ,\ \ \ y= [-a_0:a_1:a_2: a_3:a_4:a_5] \ ,\ \ \ a_i\not=0 \ .\] 
   The cubic 
   \[  (a_1)^3 (X_0)^3 - (a_0)^3(X_1)^3=0 \]
   is $G$--invariant and contains $x$ while avoiding $y$, and so the element $(x,y)$ again imposes $2$ independent conditions. 
   
  The argument for the other $r_i$ is similar: consider for instance a generic element
   $(x,y)$ in $\Gamma_{\iota_0\circ\iota_1}$. By genericity, we can write
   \[ x=[a_0:a_1:a_2:a_3:a_4:a_5]\ ,\ \ \ y= [-a_0:-a_1:a_2: a_3:a_4:a_5] \ ,\ \ \ a_i\not=0 \ .\]   
   The cubic 
       \[  (a_2)^3 (X_0)^3 - (a_0)^3(X_2)^3=0 \]
   is $G$--invariant and contains $x$ while avoiding $y$, and so the element $(x,y)$ again imposes $2$ independent conditions.

   This proves hypothesis (\rom1) of proposition \ref{voisin2} is satisfied.

  To establish hypothesis (\rom2) of proposition \ref{voisin2}, let $Y=Y_b$ be a cubic as in theorem \ref{main2}, and let us suppose there is a relation
  \[  c\,\Delta_{Y} +d\, \Gamma_{\iota_{}} +\delta =0\ \ \ \hbox{in}\ H^8(Y\times Y)\ ,\]
  where $c,d\in \QQ$ and $\delta\in\ima\bigl( A^4(\PP^5\times\PP^5)\to A^4(Y_b\times Y_b)\bigr)$.
  Looking at the action on $H^{3,1}(Y)$, we find that necessarily $c=d$ (indeed, $\delta$ does not act on $H^{3,1}(Y)$, and $\iota_{}$ acts as minus the identity on $H^{3,1}(Y)$).  
  
     On the other hand, looking at the action on $(H^{4}(Y)_{prim})^\iota$ (which is non--zero thanks to lemma \ref{trace}), we find that
    $c=-d$. We conclude that $c=d=0$, and
       so hypothesis (\rom2) is satisfied.  
  
  \begin{lemma}\label{delorme} Let $P^\prime=\PP(2^3,1^3)$. Let $r,s\in P^\prime$ and $r\not=s$. Then there exists $\sigma\in\PP H^0\bigl(P^\prime,\OO_{P^\prime}(3)\bigr)$ containing $r$ but avoiding 
  $s$.
  \end{lemma}
  
  \begin{proof} It follows from Delorme's work \cite[Proposition 2.3(\rom3)]{Del} that the locally free sheaf $\OO_{P^\prime}(2)$ is very ample. This means there exists $\sigma^\prime\in\PP H^0\bigl(P,\OO_P(2)\bigr)$ containing $r$ but avoiding 
  $s$. Taking the union of $\sigma^\prime$ with a hyperplane avoiding $s$, one obtains $\sigma$ as required.
    \end{proof}

 \begin{lemma}\label{trace} Let $Y\subset\PP^5(\C)$ be a smooth cubic as in theorem \ref{main2}. Then
   \[  \dim \, H^4(Y)^{\iota_Y}\  >1\ .\]
  \end{lemma}
 
 \begin{proof} Griffiths' description of the cohomology of a hypersurface \cite[\S 18]{TH} implies there is an isomorphism, given by the residue map
   \[  H^5(\PP^5\setminus Y)\ \supset\ H_{\le 3}\ \xrightarrow{\cong}\ H^{2,2}(Y)_{prim}\ ,\]
   where $H_{\le 3}$ is by definition the subspace of meromorphic forms with poles of order $\le 3$ along $Y$. To prove the lemma, it thus suffices to exhibit a $\iota_\PP$--invariant meromorphic $5$--form with a pole of order $3$. Let $f=f(X_0,\ldots,X_5)$ be an equation defining $Y$.
   The meromorphic form
   \[      {(X_0)^3\over f^3 } \displaystyle\sum_{j=0}^5   X_j\,   {{dX_0\wedge \cdots \wedge \hat{dX_j}\wedge\cdots\wedge d X_5}}\ \ \in H_{\le 3} \]
   does the job.
   
   Another proof of lemma \ref{trace} (suggested by the referee, whom I thank) is as follows: the cubic $Y$ contains the plane $P:=\{x_3=x_4=x_5=0\}$. The plane $P$  is not proportional to the class $h^2$ where $h\in A^1(Y)$ is a hyperplane class (indeed: if $P$ were equal to $mh^2$ in $H^4(Y)$ for some integer $m$, the intersection $P\cdot h^2$ would be a multiple of $3$, whereas $P\cdot h^2=1$). Since both $P$ and $h^2$ are $\iota_Y$--invariant, this proves the lemma.
   \end{proof}
 \end{proof}

 This closes the proof of lemma \ref{ok}, and hence of theorem \ref{main2}.     
  \end{proof}

   \subsection{Second part}  
     
   \begin{theorem}\label{main}  Let $Y\subset\PP^5(\C)$ be a smooth cubic fourfold defined by an equation
    \[ \begin{split} (X_0)^2 \ell_0(X_3,X_4,X_5)+  (X_1)^2\ell_1(X_3,X_4,X_5)+(X_2)^2\ell_2(X_3,X_4,X_5)+X_0 X_1  \ell_3(X_3,X_4,X_5)& \\ + X_0 X_2 \ell_4(X_3,X_4,X_5) +  
    X_1 X_2 \ell_5(X_3,X_4,X_5)  +g(X_3,\ldots,X_5)=0\ ,& \\
    \end{split}\]
    where the $\ell_i$ are linear forms and $g$ is a homogeneous degree $3$ polynomial.
   
    Let $X=F(Y)$ be the Fano variety of lines in $Y$, and let $\iota\in\aut(X)$ be the anti--symplectic involution of lemma \ref{inv}.
 Then
   \[  \begin{split}  \iota^\ast=-\ide\colon\ \ \ &A^i_{(2)}(X)\ \to\ A^i_{(2)}(X)\ \ \ \hbox{for}\ i=2,4\ ;\\
                            \iota^\ast=\ide\colon\ \ \ &A^4_{(j)}(X)\ \to\ A^4_{(j)}(X)\ \ \ \hbox{for}\ j=0,4\ .\\
                        \end{split}\]    
                  \end{theorem}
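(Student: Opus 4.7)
The strategy is to derive each of the four equalities from Theorem \ref{main2} together with the structural results of Shen--Vial recalled in section \ref{ss2}. I proceed piece by piece.

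For $A^2_{(2)}(X)$, I would use the incidence correspondence $P=\{(L,y)\in X\times Y \mid y\in L\}$ with projections $p\colon P\to X$ and $q\colon P\to Y$, which induces via $\phi:=p_\ast q^\ast$ the Beauville--Donagi / Shen--Vial isomorphism $A^3_{hom}(Y)\xrightarrow{\cong} A^2_{hom}(X)=A^2_{(2)}(X)$. Because the incidence subvariety $P\subset X\times Y$ is the restriction of the universal incidence inside $\gr(2,6)\times\PP^5$, it is preserved by the diagonal action induced by $\iota_\PP$, so $\phi\circ \iota_Y^\ast=\iota^\ast\circ\phi$. Combining this equivariance with Theorem \ref{main2} yields $\iota^\ast=-\ide$ on $A^2_{(2)}(X)$.

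To pass from $A^2_{(2)}(X)$ to $A^4_{(2)}(X)$, I invoke Theorem \ref{chowringfano}(\rom1): intersection with a distinguished class $\ell\in A^2_{(0)}(X)$ is an isomorphism $A^2_{(2)}(X)\xrightarrow{\cong} A^4_{(2)}(X)$. In Shen--Vial's construction, $\ell$ is built canonically from $c_2(X)$ and the class associated with the Beauville--Bogomolov form, and is therefore fixed by the automorphism $\iota$. Hence, for $a\in A^2_{(2)}(X)$,
\[ \iota^\ast(\ell\cdot a)=\iota^\ast(\ell)\cdot\iota^\ast(a)=\ell\cdot(-a)=-\ell\cdot a,\]
which combined with the previous step gives $\iota^\ast=-\ide$ on $A^4_{(2)}(X)$. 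For $A^4_{(4)}(X)$, Theorem \ref{chowringfano}(\rom2) writes every class as a sum of products $a\cdot b$ with $a,b\in A^2_{(2)}(X)$, and then $\iota^\ast(a\cdot b)=(-a)(-b)=a\cdot b$ gives $\iota^\ast=\ide$. Finally, $A^4_{(0)}(X)$ is a one-dimensional $\QQ$-subspace (by Shen--Vial it is spanned by a canonical degree-one zero-cycle $o_X$), and because $\iota^\ast$ preserves degrees of zero-cycles, it must fix $o_X$; hence $\iota^\ast=\ide$ on $A^4_{(0)}(X)$ as well.

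The main obstacle is the first step: one must justify carefully that the Shen--Vial identification $A^3_{hom}(Y)\cong A^2_{(2)}(X)$ is realised by a correspondence that is genuinely $\iota_\PP$-equivariant, so that Theorem \ref{main2} transfers from $Y$ to $X$. A secondary technical point is to confirm that the class $\ell$ from Theorem \ref{chowringfano}(\rom1) is canonical and therefore $\iota^\ast$-invariant; this requires tracing through Shen--Vial's explicit formula for $\ell$ in terms of intrinsic Chern-class data on $X$. Once these two points are in place, the remaining propagation arguments are formal consequences of the ring structure of the Chow groups together with Theorems \ref{main2} and \ref{chowringfano}, and no further input should be needed.
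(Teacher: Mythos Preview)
Your proposal is correct and follows essentially the same route as the paper: transfer Theorem \ref{main2} to $A^2_{(2)}(X)$ via the incidence correspondence (the paper uses surjectivity of $({}^t P)_\ast\colon A^3_{hom}(Y)\to A^2_{(2)}(X)$ rather than your stronger claim of isomorphism, but either suffices), then propagate to $A^4_{(2)}$, $A^4_{(4)}$, $A^4_{(0)}$ exactly as you do via Theorem \ref{chowringfano} and the $\iota$-invariance of $\ell$. The paper's justification that $\iota^\ast(\ell)=\ell$ is precisely the Chern-class argument you anticipate (namely $\ell=\tfrac{5}{6}c_2(X)$), and for $A^4_{(0)}$ the paper uses that $\ell^2$ generates it, which is equivalent to your degree argument.
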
     
   
   \begin{proof} First, we note that
    \[ A^2_{(2)}(X) = I_\ast A^4_{hom}(X)\ ,\]
    where $I\subset X\times X$ is the incidence correspondence \cite[Proof of Proposition 21.10]{SV}. On the other hand, 
    \[ I={}^t P\circ P\ \ \ \hbox{in}\ A^2(X\times X)\ ,\]
    where $X\leftarrow P\to Y$ denotes the universal family of lines on $Y$ \cite[Lemma 17.2]{SV}. Hence,
    \[ A^2_{(2)}(X) = ({}^t P)_\ast  P_\ast A^4_{hom}(X)\ .\]
    But $P_\ast\colon A^4_{hom}(X)\to A^3_{hom}(Y)$ is surjective \cite{Par}, and so
    \[  A^2_{(2)}(X) = ({}^t P)_\ast A^3_{hom}(Y)\ .\]
    
    It is readily checked that the diagram
    \[ \begin{array}[c]{ccc}
        A^3_{hom}(Y) & \xrightarrow{({}^t P)_\ast}& A^2(X)\\
      {\scriptstyle (\iota_Y)^\ast}\  \downarrow\ \ \ \ \  &&\ \ \ \ \downarrow \ {\scriptstyle \iota^\ast} \\     
      A^3_{hom}(Y) & \xrightarrow{({}^t P)_\ast}& A^2(X)\\
      \end{array}\]
      is commutative (this is because the involution extends to an involution on $P$).
      Using this diagram, theorem \ref{main2} implies that $\iota$ acts as minus the identity on $({}^t P)_\ast A^3_{hom}(Y)= A^2_{(2)}(X)$.
      
      Because the intersection product induces a surjection
      \[ A^2_{(2)}(X)\otimes A^2_{(2)}(X) \to\    A^4_{(4)}(X) \]
      (theorem \ref{chowringfano}(\rom2)), it follows that $\iota$ acts as the identity on $A^4_{(4)}(X)$.
      
      Next, we want to exploit the fact that there is an isomorphism
       \[ \cdot \ell\colon\ \ \ A^2_{(2)}(X)\ \xrightarrow{\cong}\ A^4_{(2)}(X) \]
       (theorem \ref{chowringfano}(\rom1)). Since $\iota^\ast(\ell)=\ell$ (proposition \ref{ell} below), this implies that $\iota$ acts as minus the identity on $A^4_{(2)}(X)$.
       
      \begin{proposition}\label{ell} Let $X$ be the variety of lines on a smooth cubic fourfold $Y\subset\PP^5(\C)$, and let $\iota\in\aut(X)$ be an involution induced by an involution $\iota_Y\in\aut(Y)$. Let $\ell\in A^2(X)$ be the class of theorem \ref{chowringfano}(\rom1).
      Then
        \[ \iota^\ast(\ell)=\ell\ \ \ \hbox{in}\ A^2(X)\ .\]
       \end{proposition} 
      
      \begin{proof} We give two proofs of this fact. The first proof has the benefit of brevity; the second proof will be useful in proving another result (lemma \ref{compat} below).      
      
 \noindent
 {\it First proof:\/} It is known that
   \[ \ell= {5\over 6} c_2(X)\ \ \ \hbox{in}\ A^2(X) \]
   (where the right--hand side denotes the second Chern class of the tangent bundle $T_X$ of $X$) \cite[Equation (108)]{SV}. Since
   \[ \iota^\ast c_2(X) = c_2(\iota^\ast T_X)=c_2(X)\ \ \ \hbox{in}\ A^2(X)\ ,\]
   this proves the proposition.
 
 \noindent
 {\it Second proof:\/} Shen--Vial define the class $L\in A^2(X\times X)$ (lifting the Beauville-Bogomolov class $\BB\in H^4(X\times X)$) as
     \[ L:=   {1\over 3}\bigl(  (g_1)^2  +{3\over 2} g_1 g_2 +(g_2)^2 -c_1 -c_2\bigr) - I         \ \ \ \in A^2(X\times X) \ \]
   \cite[Equation (107)]{SV}. Here $I$ is the incidence correspondence, and
     \[ \begin{split} g&:=-c_1(\EE_2)\ \ \ \in\ A^1(X)\ ,\\
                        c&:=c_2(\EE_2)\ \ \ \in\ A^2(X)\ ,\\
                        g_i&:= (p_i)^\ast(g)  \ \ \ \in\ A^1(X\times X)\ \ \ (i=1,2)\ ,\\
                        c_i&:= (p_i)^\ast(c)  \ \ \ \in\ A^2(X\times X)\ \ \ (i=1,2)\ ,\\ 
                       \end{split}\] 
    where $\EE_2$ is the rank $2$ vector bundle coming from the tautological bundle on the Grassmannian, and $p_i\colon X\times X\to X$ denote the two projections. 
   
   Clearly one has 
   \[ (\iota\times\iota)^\ast (I) =I\ ,\ \ \ (\iota\times\iota)^\ast(c_i)=c_i\ ,\ \ \  (\iota\times\iota)^\ast(g_i)=g_i  \ .\]
   In view of the definition of $L$, it follows that
   \[  (\iota\times\iota)^\ast (L) =L\ \ \ \hbox{in}\ A^2(X\times X)\ .\]
    Using Lieberman's lemma \cite[Lemma 3.3]{V3}, plus the fact that ${}^t \Gamma_\iota= \Gamma_\iota$, this means there is a commutativity relation
     \begin{equation}\label{commut}  L\circ \Gamma_\iota= \Gamma_\iota\circ L\ \ \ \hbox{in}\ A^2(X\times X)\ .\end{equation}
     
     The class $\ell$ is defined as $\ell:=(i_\Delta)^\ast(L)\in A^2(X)$. We now find that
     \[ \begin{split}  \iota^\ast (\ell)&=\iota^\ast (i_\Delta)^\ast(L)    \\
                                    & = (i_\Delta)^\ast (\iota\times \iota)^\ast (L) \\
                                    & =  (i_\Delta)^\ast ( \Gamma_\iota\circ L\circ \Gamma_\iota) \\
                                    & =  (i_\Delta)^\ast (L) =\ell\ \ \ \ \hbox{in}\ A^2(X)\ .\\
                                 \end{split} \]
                  Here the second equality is by virtue of the commutative diagram
            \[  \begin{array}[c]{ccc}
            X & \xrightarrow{i_\Delta} & X\times X\\
            {\scriptstyle \iota} \ \downarrow\ \ \ \ & & \ \ \ \ \downarrow\ {\scriptstyle \iota\times\iota}\\
               X & \xrightarrow{i_\Delta} & X\times X\\
             \end{array}\]  
          The third equality is again Lieberman's lemma, plus the fact that ${}^t \Gamma_\iota=\Gamma_\iota$. The last equality is (\ref{commut}).  
     \end{proof}
     
     It only remains to prove theorem \ref{main} is true for $(i,j)=(4,0)$. This follows from the fact that $A^4_{(0)}(X)$ is generated by $\ell^2$ \cite{SV}, plus the fact that $\ell$ is $\iota$--invariant (proposition \ref{ell}). Theorem \ref{main} is now proven.
   \end{proof}
   
 For later use, we remark that the argument of proposition \ref{ell} also proves the following compatibility statement:  
    
  \begin{lemma}\label{compat} Let $X$ be the variety of lines on a smooth cubic fourfold $Y\subset\PP^5(\C)$, and let $\iota\in\aut(X)$ be an involution induced by an involution $\iota_Y\in\aut(Y)$. Then
    \[ \iota^\ast A^i_{(j)}(X)\ \subset\ A^i_{(j)}(X)\ \ \ \forall i,j\ .\]
     \end{lemma}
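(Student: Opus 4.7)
The plan is to show that the conjugation $\Gamma_\iota \circ \pi^X_k = \pi^X_k \circ \Gamma_\iota$ in $A^\ast(X \times X)$ for every Chow--K\"unneth projector $\pi^X_k$ of the Shen--Vial decomposition. Once this commutativity is established, it follows immediately that $\iota^\ast = (\Gamma_\iota)_\ast$ preserves the image $(\pi^X_{2i-j})_\ast A^i(X) = A^i_{(j)}(X)$, which is what we want.

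To establish the commutativity, I will exploit the fact that the projectors $\pi^X_k$ (and more generally the Fourier transform of Shen--Vial) are polynomial expressions in a handful of basic classes, namely the incidence correspondence $I$, the class $L \in A^2(X \times X)$ defined via
\[ L = \tfrac{1}{3}\bigl((g_1)^2 + \tfrac{3}{2} g_1 g_2 + (g_2)^2 - c_1 - c_2\bigr) - I, \]
and the pullbacks $g_i, c_i$ of $g = -c_1(\EE_2)$ and $c = c_2(\EE_2)$ under the two projections. Each of these building blocks is manifestly $(\iota \times \iota)$--invariant: the incidence correspondence $I$ is $\iota$--equivariant because $\iota$ is induced by a projective linear involution (so it sends lines to lines preserving the incidence relation), while $\EE_2$ pulls back to itself under $\iota$ since $\iota$ is the restriction of a linear automorphism of the ambient Grassmannian. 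This already gives $(\iota\times\iota)^\ast L = L$, exactly as in the second proof of proposition \ref{ell}.

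By Lieberman's lemma (\cite[Lemma 3.3]{V3}), the invariance $(\iota \times \iota)^\ast T = T$ combined with ${}^t \Gamma_\iota = \Gamma_\iota$ translates into the commutation relation $T \circ \Gamma_\iota = \Gamma_\iota \circ T$ in $A^\ast(X \times X)$ for each $T \in \{I, L, g_i, c_i\}$. Since composition of correspondences is compatible with sums and (intersection) products of classes, the same commutation relation holds for any correspondence built out of these as a polynomial expression. In particular, it holds for each $\pi^X_k$ (constructed in \cite[Section 3]{SV} from exactly these ingredients) and for the Fourier projectors onto $A^i_{(j)}(X)$.

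The main technical point I would need to verify carefully is that the projectors cutting out $A^i_{(j)}(X)$ in \cite{SV} are indeed polynomial expressions in $I, L, g_i, c_i$ and nothing more; this is essentially the content of their explicit formulae. Once that bookkeeping is in place, the lemma follows, because any $\alpha \in A^i_{(j)}(X) = (\pi^X_{2i-j})_\ast A^i(X)$ satisfies
\[ \iota^\ast \alpha = (\Gamma_\iota)_\ast (\pi^X_{2i-j})_\ast \alpha = (\pi^X_{2i-j} \circ \Gamma_\iota)_\ast \alpha = (\pi^X_{2i-j})_\ast (\iota^\ast \alpha) \in A^i_{(j)}(X). \]
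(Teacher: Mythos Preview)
Your strategy is the same as the paper's: establish $(\iota\times\iota)$--invariance of the correspondences that define the decomposition, then invoke Lieberman's lemma. Two remarks, one a small logical slip, the other a genuine simplification you missed.

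\textbf{Order of operations.} You first apply Lieberman to each building block $T\in\{I,L,g_i,c_i\}$ to get $T\circ\Gamma_\iota=\Gamma_\iota\circ T$, and then assert that ``since composition of correspondences is compatible with sums and (intersection) products of classes'', the commutation transfers to any polynomial in these. That implication is not obvious: composition $\circ$ and intersection product $\cdot$ on $A^\ast(X\times X)$ are different operations, and commutation with $\Gamma_\iota$ under $\circ$ is not a priori multiplicative for $\cdot$. The fix is to reverse the order. Since $(\iota\times\iota)^\ast$ is a ring homomorphism for the intersection product, invariance of the building blocks immediately gives $(\iota\times\iota)^\ast P=P$ for any polynomial $P$ in them; \emph{then} apply Lieberman once to $P$. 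This is exactly how the paper handles it for $P=L^r$.

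\textbf{Avoiding the projectors.} The paper does not bother with the CK projectors $\pi^X_k$ at all, and so sidesteps the ``main technical point'' you flag (verifying that the $\pi^X_k$ are polynomials in $I,L,g_i,c_i$). Instead it uses that the Fourier transform $\FF$ is by construction a polynomial in $L$ alone, hence commutes with $\iota^\ast$, and then invokes the \emph{definition}
\[
A^i_{(j)}(X)=\bigl\{a\in A^i(X)\ \vert\ \FF(a)\in A^{4-i+j}(X)\bigr\}
\]
from \cite[Theorem 2]{SV}. From $\FF(\iota^\ast a)=\iota^\ast\FF(a)$ and the fact that $\iota^\ast$ preserves codimension, the lemma is immediate. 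This is shorter and requires only the invariance of $L$, which you have already established.
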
  
     
   \begin{proof} Let $L\in A^2(X\times X)$ be the Shen--Vial class as above. We observe that equality (\ref{commut}) also implies
    \[ (\iota\times\iota)^\ast (L^r) = \bigl((\iota\times\iota)^\ast (L)\bigr)^r = L^r\ \ \ \hbox{in}\ A^4(X\times X)\ \ \ \forall r\in\NN\ .\] 
   Using Lieberman's lemma, this is equivalent to the commutativity relation
    \[  \Gamma_\iota\circ L^r = L^r\circ \Gamma_\iota\ \ \ \hbox{in}\ A^{2r}(X\times X)\ .\]
  
  Since the Shen--Vial Fourier transform $\FF\colon A^\ast(X)\to A^\ast(X)$ is defined by a polynomial in $L$ \cite[Section C.1]{SV}, we find that
    \[ \FF(\iota^\ast(a))=\iota^\ast \FF(a)\ \ \ \forall a\in A^i(X)\ .\]
    This proves the lemma, for the decomposition of \cite{SV} is defined as
    \[  A^i_{(j)}(X):= \bigl\{ a\in A^i(X)\ \vert\ \FF(a)\in A^{4-i+j}(X)\bigr\}\ .\]
      \end{proof}

 \section{Corollaries}
 
 In this last section, we consider the quotient 
   $ Z:=X/\iota$,
   for $(X,\iota)$ as in theorem \ref{main}. The variety $Z$ is a slightly singular Calabi--Yau variety. As is well--known, Chow groups with $\QQ$--coefficients of quotient varieties such as $Z$ still have a ring structure \cite[Examples 8.3.12 and 17.4.10]{F}. For this reason, we will write $A^i(Z)$ for the Chow group of codimension $i$ cycles on $Z$ (just as in the smooth case).
 
 \begin{corollary}\label{cor}  Let $(X,\iota)$ be as in theorem \ref{main}. Let $Z:=X/\iota$ be the quotient. Then 
 the image of the intersection product map
   \[ A^2(Z)\otimes A^2(Z)\ \to\ A^4(Z) \]
   has dimension $1$.
  \end{corollary}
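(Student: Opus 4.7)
The strategy is to pull the problem back to $X$ via the finite quotient $\pi\colon X\to Z$: since $\iota$ has finite order and we work with rational coefficients, $\pi^{\ast}\colon A^{\ast}(Z)\to A^{\ast}(X)$ is a ring isomorphism onto the invariant sub-ring $A^{\ast}(X)^{\iota}$. The image of $A^{2}(Z)\otimes A^{2}(Z)\to A^{4}(Z)$ is therefore identified with $A^{2}(X)^{\iota}\cdot A^{2}(X)^{\iota}\subset A^{4}(X)^{\iota}$, and it suffices to prove this sub-space is one-dimensional.

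The first step is a Fourier-theoretic calculation. Combining lemma \ref{compat} (so $\iota^{\ast}$ respects the Shen--Vial decomposition $A^{i}(X)=\bigoplus_{j}A^{i}_{(j)}(X)$) with theorem \ref{main}, one obtains
\[ A^{2}(X)^{\iota}\ \subset\ A^{2}_{(0)}(X),\qquad A^{4}(X)^{\iota}\ =\ A^{4}_{(0)}(X)\oplus A^{4}_{(4)}(X)\ =\ \QQ\,\ell^{2}\oplus A^{4}_{(4)}(X), \]
using $A^{4}_{(0)}(X)=\QQ\,\ell^{2}$ (recalled in the proof of theorem \ref{main}). Any product $a\cdot b$ with $a,b\in A^{2}(X)^{\iota}$ therefore decomposes uniquely as $c\,\ell^{2}+d$ with $c\in\QQ$ and $d\in A^{4}_{(4)}(X)$, and the whole problem reduces to showing $d=0$.

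For the second step I would invoke the tautological sub-ring $R^{\ast}(X)\subset A^{\ast}(X)$ generated by $A^{1}(X)$ and the Chern classes of $T_{X}$. By Voisin's results (see \cite[Section 22]{SV}) one has $R^{\ast}(X)\subset A^{\ast}_{(0)}(X)$, and $R^{\ast}(X)$ injects into $H^{\ast}(X,\QQ)$; since $H^{8}(X,\QQ)=\QQ$ and $\ell^{2}\neq 0$ in cohomology, this forces $R^{4}(X)=\QQ\,\ell^{2}$. Verifying the inclusion $A^{2}(X)^{\iota}\subset R^{2}(X)$ then gives $a\cdot b\in R^{4}(X)=\QQ\,\ell^{2}$ and finishes the proof.

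The identification $A^{2}(X)^{\iota}\subset R^{2}(X)$ is where I expect the real difficulty to lie. As noted in remark \ref{pity}, the bigradedness that would directly imply $A^{2}_{(0)}(X)\cdot A^{2}_{(0)}(X)\subset A^{4}_{(0)}(X)$ is not established for the cubics in question, so one cannot simply appeal to the multiplicative structure of the Fourier decomposition. For a very general cubic $Y$ in the family one at least has $\rho(Y)=1$, hence $A^{1}(X)=\QQ\,g$ with $g$ the Pl\"ucker polarization, and $R^{2}(X)\cap A^{2}(X)^{\iota}\supset\QQ\,g^{2}+\QQ\,\ell$; it would remain to show that this exhausts $A^{2}(X)^{\iota}$. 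A possible workaround would be to combine the Fourier compatibility $\FF\circ\iota^{\ast}=\iota^{\ast}\circ\FF$ extracted from the proof of lemma \ref{compat} with the surjection $A^{2}_{(2)}(X)\otimes A^{2}_{(2)}(X)\twoheadrightarrow A^{4}_{(4)}(X)$ of theorem \ref{chowringfano}(\rom2), arguing directly that a product of two $\iota$-invariant classes cannot contribute to $A^{4}_{(4)}(X)$.
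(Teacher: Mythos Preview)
Your first step is exactly the paper's: via $\pi^\ast$ one reduces to showing $A^2(X)^\iota\cdot A^2(X)^\iota$ is one-dimensional, and the inclusion $A^2(X)^\iota\subset A^2_{(0)}(X)$ (your display) is the content of the paper's lemma \ref{split}, proved in the same way from lemma \ref{compat} and theorem \ref{main}.

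The gap is in your second step. The inclusion $A^2(X)^\iota\subset R^2(X)$ you aim for is not established, and you correctly flag it as the hard point; there is no reason to expect $A^2_{(0)}(X)$ to coincide with the tautological piece $R^2(X)$ for an arbitrary cubic in this family, so this route is unlikely to close. Your proposed workaround via the surjection $A^2_{(2)}\otimes A^2_{(2)}\twoheadrightarrow A^4_{(4)}$ also does not obviously help: surjectivity does not by itself prevent a product of two classes in $A^2_{(0)}$ from having a non-zero $A^4_{(4)}$-component.

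What you are missing is a partial multiplicativity result already available in \cite{SV} for \emph{every} smooth cubic fourfold (not just very general ones): \cite[Proposition 22.8]{SV} gives
\[
A^2_{(0)}(X)\cdot A^2_{(0)}(X)\ \subset\ A^4_{(0)}(X)\oplus A^4_{(2)}(X)\ .
\]
This is precisely the statement that the $A^4_{(4)}$-component $d$ in your decomposition $a\cdot b=c\,\ell^2+d$ vanishes. The paper then intersects with $A^4(X)^\iota$: by lemma \ref{compat} one gets $A^4_{(0)}(X)^\iota\oplus A^4_{(2)}(X)^\iota$, and theorem \ref{main} gives $A^4_{(2)}(X)^\iota=0$. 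Hence the image sits in $A^4_{(0)}(X)=\QQ\,\ell^2$, and the proof is complete. So the remark \ref{pity} obstruction you worried about is irrelevant here: full bigradedness ($A^2_{(0)}\cdot A^2_{(0)}\subset A^4_{(0)}$) is not needed, only the weaker inclusion into $A^4_{(0)}\oplus A^4_{(2)}$, which is known unconditionally.
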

 
 \begin{proof} 
 We start by establishing a lemma:
 
 \begin{lemma}\label{split} Let $(X,\iota)$ be as in theorem \ref{main}. 
 Then
     \[ A^2(X)^\iota\ \subset\ A^2_{(0)}(X)\ .\]
   \end{lemma}
   
 \begin{proof} Let $c\in A^2(X)^\iota$, and let us write 
   \[ c=c_0 + c_2\ \ \ \hbox{in}\ A^2_{(0)}(X)\oplus A^2_{(2)}(X)\ ,\]
   where $c_j\in A^2_{(j)}(X)$. Since $c$ is $\iota$--invariant, we also have
   \[ c=\iota^\ast(c)= \iota^\ast(c_0) + \iota^\ast(c_2) =  \iota^\ast(c_0) - c_2\ \ \ A^2(X)\ \]
   (here we have used theorem \ref{main} to conclude that $\iota^\ast(c_2)=-c_2$).
   But we know that $\iota^\ast(c_0)\in A^2_{(0)}(X)$ (lemma \ref{compat}), and so (by unicity of the decomposition $c=c_0+c_2$) we must have
    \[ \iota^\ast(c_0)=c_0\ ,\ \ \ -c_2=c_2\ .\] 
   \end{proof}

 Now, let $p\colon X\to Z$ denote the quotient morphism. Thanks to lemma \ref{split}, one has
   \[ p^\ast A^2(Z)\ \subset\ A^2_{(0)}(X)\ .\]
   It follows that
   \[  \begin{split}  p^\ast   \ima \bigl( A^2(Z)\otimes A^2(Z)\to A^4(Z)\bigr) \ &\subset\ \ima \bigl(  p^\ast A^2(Z)\otimes p^\ast A^2(Z)\to A^4(X)\bigr)
     \\
     \ &\subset\ 
         \ima \bigl(   A^2_{(0)}(X)\otimes  A^2_{(0)}(X)\to A^4(X)\bigr)\\
        &\subset\ A^4_{(0)}(X)\oplus A^4_{(2)}(X) 
         \ .\\
         \end{split}\]
         (Here for the last inclusion we have used \cite[Proposition 22.8]{SV}.)
         
        On the other hand, one has
        \[  p^\ast   \ima \bigl( A^2(Z)\otimes A^2(Z)\to A^4(Z)\bigr)\ \subset\ A^4(X)^\iota\ ,\]
        and so (by combining with the above inclusion) we find that
         \[  p^\ast   \ima \bigl( A^2(Z)\otimes A^2(Z)\to A^4(Z)\bigr)\ \subset\  \bigl(A^4_{(0)}(X)\oplus A^4_{(2)}(X) \bigr) \cap 
              A^4(X)^\iota\ .\]     
       
       But we have seen (lemma \ref{compat}) that $\iota$ respects the Fourier decomposition, and so
       \[ \bigl(A^4_{(0)}(X)\oplus A^4_{(2)}(X) \bigr) \cap 
              A^4(X)^\iota =  A^4_{(0)}(X)^\iota \oplus A^4_{(2)}(X)^\iota    \ .\]           
       But $A^4_{(2)}(X)^\iota=0$ (theorem \ref{main}), and so
       \[ A^4(X)^\iota =  A^4_{(0)}(X)^\iota\ .\]
       Since $\ell^2$ generates $A^4_{(0)}(X)$ and is $\iota$--invariant (proposition \ref{ell}), we conclude that
       \[      A^4(X)^\iota =  A^4_{(0)}(X)^\iota =A^4_{(0)})X)\cong \QQ \ .\]     
         \end{proof}

  
   

  \begin{corollary}\label{cor3} Let $(X,\iota)$ be as in theorem \ref{main}. Let $Z:=X/\iota$ be the quotient.
 Then the image of the intersection product map
    \[ \ima \bigl(  A^3(Z)\otimes A^1(Z)\xrightarrow{} A^4(Z) \bigr) \]
   has dimension $1$.
   \end{corollary}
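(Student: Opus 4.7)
The plan is to adapt the proof of corollary \ref{cor}, using the pair $(A^3,A^1)$ in place of $(A^2,A^2)$. Let $p\colon X\to Z$ denote the quotient morphism; with $\QQ$-coefficients $p^\ast$ identifies $A^\ast(Z)$ with $A^\ast(X)^\iota$, so it suffices to bound the dimension of the subspace
\[ p^\ast A^3(Z)\cdot p^\ast A^1(Z)\ \subset\ A^4(X)^\iota\ .\]

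First I would note that $A^1(X)=A^1_{(0)}(X)$: indeed, $X$ is simply connected so $A^1_{hom}(X)=0$, while $A^1_{(j)}(X)=0$ for $j>1$ by theorem \ref{fanomck}. Hence $p^\ast A^1(Z)\subset A^1_{(0)}(X)$. Moreover, since $X$ is hyperk\"ahler of $K3^{[2]}$-type, the Fourier decomposition of \cite{SV} gives $A^3(X)=A^3_{(0)}(X)\oplus A^3_{(2)}(X)$.

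The crucial step, playing the role that \cite[Proposition 22.8]{SV} plays in the proof of corollary \ref{cor}, is to establish
\begin{equation}\label{keyincl}
A^1_{(0)}(X)\cdot A^3_{(j)}(X)\ \subset\ A^4_{(j)}(X)\ \ \ \hbox{for}\ j=0,2\ .
\end{equation}
Granting \eqref{keyincl}, multiplying one finds $p^\ast A^3(Z)\cdot p^\ast A^1(Z)\subset A^4_{(0)}(X)\oplus A^4_{(2)}(X)$. Intersecting with $A^4(X)^\iota$ and invoking lemma \ref{compat} to split along the Fourier decomposition, one gets
\[ p^\ast A^3(Z)\cdot p^\ast A^1(Z)\ \subset\ A^4_{(0)}(X)^\iota\oplus A^4_{(2)}(X)^\iota\ .\]
By theorem \ref{main}, $\iota^\ast=-\ide$ on $A^4_{(2)}(X)$, so $A^4_{(2)}(X)^\iota=0$, and $A^4_{(0)}(X)=\QQ\cdot\ell^2$ is one-dimensional (as used in corollary \ref{cor}) with $\ell^2$ being $\iota$-invariant by proposition \ref{ell}. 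Injectivity of $p^\ast$ yields the upper dimension bound; non-triviality follows since for the natural $\iota$-invariant polarization $g\in A^1(X)$ the element $g\cdot(g\cdot\ell)\in A^4(X)$ is a non-zero multiple of $\ell^2$ and lies in the image of the product map.

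The main obstacle is the proof of \eqref{keyincl} unconditionally, i.e.\ without assuming the full bigraded ring structure on $A^\ast_{(\ast)}(X)$ (which is unavailable in general, cf.\ remark \ref{pity}). The natural route is to exploit that the Fourier transform $\FF$ is a polynomial in the Beauville--Bogomolov class $L$ (\cite[Section C.1]{SV}) and to leverage the multiplicative analysis of \cite[Section 22]{SV}, which already yields closely related inclusions such as $A^2_{(0)}(X)\cdot A^2_{(0)}(X)\subset A^4_{(0)}(X)\oplus A^4_{(2)}(X)$.
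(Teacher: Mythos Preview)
Your overall architecture is correct and matches the paper's, but the key inclusion \eqref{keyincl} is stated too strongly and left unproven, which is the one real gap.

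For $j=2$ your claim $A^1(X)\cdot A^3_{(2)}(X)\subset A^4_{(2)}(X)$ is exactly \cite[Proposition~22.6]{SV}, so that half is fine. For $j=0$, however, the inclusion $A^1(X)\cdot A^3_{(0)}(X)\subset A^4_{(0)}(X)$ would amount to a piece of the multiplicativity of the CK decomposition that, as you yourself note (and as remark~\ref{pity} emphasizes), is \emph{not} known for an arbitrary smooth cubic. Fortunately your argument does not need it: the weaker inclusion
\[
A^1(X)\cdot A^3_{(0)}(X)\ \subset\ A^4_{(0)}(X)\oplus A^4_{(2)}(X)
\]
is enough, and this is what the paper actually proves. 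The paper's route is to use the explicit description
\[
A^3_{(0)}(X)=A^1(X)_{prim}\cdot A^2_{(0)}(X)
\]
(the inclusion ``$\supset$'' is \cite[Proposition~22.7]{SV}, and ``$\subset$'' follows from \cite[Remark~4.7]{SV}), and then invoke \cite[Equation~(118)]{SV} to conclude $A^1(X)_{prim}\cdot A^2_{(0)}(X)\cdot A^1(X)\subset A^4_{(0)}(X)\oplus A^4_{(2)}(X)$. Once you have this, your endgame (intersect with $A^4(X)^\iota$, split via lemma~\ref{compat}, kill $A^4_{(2)}(X)^\iota$ by theorem~\ref{main}, and identify $A^4_{(0)}(X)=\QQ\cdot\ell^2$) is exactly what the paper does.

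One small organizational difference: the paper first disposes of the $A^3_{(2)}(X)^\iota$ contribution entirely (its product with $A^1(X)^\iota$ lands in $A^4_{(2)}(X)^\iota=0$, so the map is zero there) and only then analyzes $A^3_{(0)}(X)^\iota\cdot A^1(X)^\iota$. This is equivalent to your approach but makes the reliance on theorem~\ref{main} for $A^4_{(2)}$ appear twice rather than once.
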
 
   
  \begin{proof} As above, let $p\colon X\to Z$ denote the quotient morphism. It will suffice to show that
   \[ \ima \bigl(  A^3(X)^\iota\otimes A^1(X)^\iota\xrightarrow{} A^4(X)^\iota \bigr)\ \]
    is of dimension $1$.
    
  There is a decomposition 
    \[ A^3(X)^\iota= A^3_{(0)}(X)^\iota\oplus A^3_{(2)}(X)^\iota\ \]
    (this follows from lemma \ref{compat}).
    Moreover, it is known that
    \[ A^3_{(2)}(X)\cdot A^1(X)\ \subset\ A^4_{(2)}(X) \ \]
    \cite[Proposition 22.6]{SV}.
    But we know (theorem \ref{main}) that $A^4_{(2)}(X)^\iota=0$, and so
    \[ A^3_{(2)}(X)^\iota\otimes A^1(X)^\iota\xrightarrow{} A^4(X)^\iota    \]
    is the zero map. It follows that
    \[    \ima \bigl(  A^3(X)^\iota\otimes A^1(X)^\iota\xrightarrow{} A^4(X)^\iota \bigr) =\ima \bigl(  A^3_{(0)}(X)^\iota\otimes A^1(X)^\iota\xrightarrow{} A^4(X)^\iota 
       \bigr)\ .\]
     To analyze the right--hand side, we observe that
     \[ A^3_{(0)}(X)=A^1(X)_{prim}\cdot A^2_{(0)}(X)\ \]
     (indeed, the inclusion ``$\supset$'' is proven in \cite[Proposition 22.7]{SV}; the inclusion ``$\subset$'' follows from \cite[Remark 4.7]{SV}). This implies that
     \[ \begin{split} \ima \bigl(  A^3_{(0)}(X)^\iota\otimes A^1(X)^\iota\xrightarrow{} A^4(X)^\iota 
       \bigr)  \ &\subset\ \Bigl( A^1(X)_{prim}\cdot A^2_{(0)}(X)\cdot A^1(X)\Bigr) \cap A^4(X)^\iota\ \\
                    &\subset \ \Bigl( A^4_{(0)}(X)\oplus A^4_{(2)}(X)\Bigr) \cap A^4(X)^\iota\ \\
                    & = A^4_{(0)}(X)\ .\\
                    \end{split}     \]
                    Here, the second equality is \cite[Equation (118)]{SV}, and the third equality is theorem \ref{main} combined with lemma \ref{compat}.
          \end{proof}

There is a similar statement for $1$--cycles on the quotient:
 
 \begin{corollary}\label{cor2} Let $(X,\iota)$ be as in theorem \ref{main}. Let $Z:=X/\iota$ be the quotient.
 Then the image of the intersection product map
    \[ \ima \bigl(  A^2(Z)\otimes A^1(Z)\xrightarrow{} A^3(Z) \bigr) \]
  injects into $H^6(Z)$ under the cycle class map.
   \end{corollary}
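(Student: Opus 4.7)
The plan is to follow the template of corollaries \ref{cor} and \ref{cor3}. Let $p\colon X\to Z$ denote the quotient morphism; since we work with $\QQ$-coefficients, $p^*$ identifies $A^*(Z)$ with $A^*(X)^\iota$ and $H^*(Z)$ with $H^*(X)^\iota$. Hence it suffices to show that
\[ W := \ima\bigl(A^2(X)^\iota\otimes A^1(X)^\iota\xrightarrow{\cdot} A^3(X)\bigr) \]
injects into $H^6(X)$ under the cycle class map.

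The first step is to locate $W$ inside the Shen--Vial Fourier decomposition: lemma \ref{split} gives $A^2(X)^\iota\subset A^2_{(0)}(X)$, and since $A^1_{(j)}(X)=0$ for $j\neq 0$ we have $A^1(X)^\iota\subset A^1_{(0)}(X)$. Thus $W\subset A^2_{(0)}(X)\cdot A^1(X)$. The second step is to invoke Shen--Vial multiplicativity to refine this to $W\subset A^3_{(0)}(X)$. Splitting $A^1(X)=\QQ g\oplus A^1(X)_{\rm prim}$ with $g$ the Pl\"ucker polarization, one has $A^2_{(0)}(X)\cdot A^1(X)_{\rm prim}\subset A^3_{(0)}(X)$ by \cite[Proposition 22.7]{SV} (already used in the proof of corollary \ref{cor3}), while $A^2_{(0)}(X)\cdot g\subset A^3_{(0)}(X)$ follows from the compatibility of the Fourier decomposition with intersection by $g$.

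The third and final step is to deduce that $A^3_{(0)}(X)$ injects into $H^6(X)$. This is a Beauville--Voisin/Murre-D type statement: the zero-weight piece of $A^3$ consists of ``algebraic'' classes, and by the description $A^3_{(0)}(X)=A^1(X)_{\rm prim}\cdot A^2_{(0)}(X)$ recorded in the proof of corollary \ref{cor3}, it lies in the subring generated by divisors and $\ell$, which is expected to inject into cohomology for Fano varieties of cubic fourfolds.

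The main obstacle is this last injectivity step. The multiplicativity inclusion $A^2_{(0)}\cdot A^1\subset A^3_{(0)}$ is in the same spirit as---but distinct from---the open inclusion $A^2_{(0)}\cdot A^2_{(0)}\subset A^4_{(0)}$ flagged in remark \ref{pity}; the divisor-factor version should be genuinely available from \cite{SV}. Establishing $A^3_{(0)}(X)\hookrightarrow H^6(X)$ for the possibly non-generic cubics in our family is the most delicate point, and may require routing through the cylinder correspondence $X\leftarrow P\to Y$ together with known injectivity properties for $({}^tP)_*\colon A^3_{hom}(Y)\to A^2(X)$.
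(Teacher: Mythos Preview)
Your strategy is exactly the paper's: pull back via $p$, use lemma \ref{split} to land in $A^2_{(0)}(X)\cdot A^1(X)$, push this into $A^3_{(0)}(X)$, and then argue that $A^3_{(0)}(X)$ injects into cohomology. But you have left two holes, one minor and one that you yourself flag as the ``main obstacle''.

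For the inclusion $A^2_{(0)}(X)\cdot A^1(X)\subset A^3_{(0)}(X)$: your treatment of the primitive part via \cite[Proposition 22.7]{SV} is fine, but the $g$-part cannot be dismissed as ``compatibility of the Fourier decomposition with intersection by $g$''---that is precisely a multiplicativity statement of the kind remark \ref{pity} warns is not known for arbitrary cubics. The paper sidesteps this by citing \cite[Proposition A.7]{FLV}, which gives the full inclusion $A^2_{(0)}(X)\cdot A^1(X)\subset A^3_{(0)}(X)$ directly.

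For the injectivity of $A^3_{(0)}(X)\to H^6(X)$, which you treat as delicate and propose attacking via the cylinder correspondence: there is in fact a short self-contained argument using only the Fourier transform. By definition $a\in A^3_{(0)}(X)$ means $\FF(a)\in A^1(X)$. Since $\FF$ is given by an algebraic correspondence, if $a$ is homologically trivial then so is $\FF(a)\in A^1(X)$; but $A^1_{hom}(X)=0$, hence $\FF(a)=0$. Now apply $\FF$ once more and use \cite[Theorem 2.4]{SV}, which gives $\FF\circ\FF(a)=\tfrac{25}{2}\,a$, to conclude $a=0$. No recourse to $P$ or to genericity of $Y$ is needed.
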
 
   
  \begin{proof} As above, let $p\colon X\to Z$ denote the quotient morphism. We have seen (lemma \ref{split}) that
     \[ p^\ast A^2(Z)\ \subset\ A^2_{(0)}(X)\ ,\]
     and so
     \[ p^\ast \ima \bigl(  A^2(Z)\otimes A^1(Z)\xrightarrow{} A^3(Z)\bigr) \ \subset\  \ima \bigl( A^2_{(0)}(X)\otimes A^1(X)\to A^3(X) \bigr)\ .  \]     
      It is known \cite[Proposition A.7]{FLV} that  
     \[  \ima \bigl( A^2_{(0)}(X)\otimes A^1(X)\to A^3(X)\bigr) \ \subset\  A^3_{(0)}(X)  \ ,\]   
     and so it follows that
     \[    p^\ast \ima \bigl(  A^2(Z)\otimes A^1(Z)\xrightarrow{} A^3(Z) \bigr) \ \subset\ A^3_{(0)}(X)\ .\]
       But $A^3_{(0)}(X)$ injects into cohomology under the cycle class map \cite{SV}.
  ( A quick way of proving this injectivity can be as follows: let $\FF$ be the Fourier transform of \cite{SV}. We have that $a\in A^3(X)$ is in $A^3_{(0)}(X)$ if and only if $\FF(a)\in A^1_{(0)}(X)=A^1(X)$ \cite[Theorem 2]{SV}. Suppose $a\in A^3_{(0)}(X)$ is homologically trivial. Then also $\FF(a)\in A^1(X)$ is homologically trivial, hence $\FF(a)=0$ in $A^1(X)$. But then, using \cite[Theorem 2.4]{SV}, we find that
      \[ {25\over 2} a = \FF\circ \FF(a)=0\ \ \ \hbox{in}\ A^3(X)\ .)\]
%
%
     \end{proof}

\vskip1cm
\begin{nonumberingt} Thanks to the referee for helpful comments.
Thanks to Kai and Len and Yoyo for daily moral support.
\end{nonumberingt}

\vskip1cm

\end{document}